\documentclass{scrartcl}
\usepackage{graphicx} 
\usepackage{amsmath}
\usepackage{amsthm}
\usepackage{amsfonts}
\usepackage{amssymb}
\usepackage{array}
\usepackage{nicematrix}
\usepackage{subfiles}
\usepackage{lmodern}
\usepackage{comment}
\usepackage{listings}
\usepackage{mathtools}
\usepackage{hyperref}
\usepackage{changepage}
\usepackage{faktor}
\usepackage{caption}
\usepackage{float}
\usepackage{tipa}
\usepackage[OT1]{fontenc}

\usepackage{enumitem}
\usepackage[a4paper,total={17cm, 26cm}]{geometry}
\usepackage[backend=biber,style=alphabetic, maxnames=500, maxalphanames=500, doi=false, isbn=false, url=false, eprint=false]{biblatex}
\usepackage{nicematrix}
\renewbibmacro*{in:}{}

\newtheorem{Definition}{Definition}[section]
\newtheorem{Remark}[Definition]{Remark}
\newtheorem{Example}[Definition]{Example}
\newtheorem{Theorem}[Definition]{Theorem}
\newtheorem{Proposition}[Definition]{Proposition}
\newtheorem{Lemma}[Definition]{Lemma}
\newtheorem{Corollary}[Definition]{Corollary}

\mathtoolsset{centercolon}
\title{Wild frieze patterns over the integers}
\date{}
\author{Leon Eickhoff \footnote{\hspace{0.18em} Institut für Algebra, Zahlentheorie und Diskrete Mathematik, Institut für Algebraische Geometrie\\ Leibniz Universität
Hannover, Welfengarten 1, 30167 Hannover, Germany
\\
\noindent
E-Mail: eickhoff@math.uni-hannover.de
\\
\\
\textit{2020 Mathematics Subject Classification.} 05E99, 13F60, 52B45, 05C20 
\\
\textit{Keywords}. frieze pattern, cluster algebra, directed graph, admissible labeling, quiddity cycle
}
}
\begin{document}

\maketitle
\begin{abstract}
    \noindent
    Abstract. \textit{We study frieze patterns over the integers that are allowed to have wild entries. We introduce the quiddity number as a new invariant. The quiddity number is then used to classify strongly connected components of the directed graph $\Gamma_{2,n}(\mathbb{Z})$. Furthermore, we show that every finite simple directed graph arises as an induced subgraph of a directed graph $\Gamma_{2,n}(\mathbb{Z})$ for $n$ sufficiently large.}
\end{abstract}
\section{Introduction}
Frieze patterns were first introduced in \cite{Cox71} by Coxeter. Conway and Coxeter related frieze patterns with entries in the positive integers to triangulations of convex polygons in \cite{CC73a}. 
Almost 30 years later, Fomin and Zelevinsky introduced cluster algebras \cite{FZ02}. These algebras provide a common framework for different recurrence relations satisfying the ``Laurent Phenomenon". Frieze patterns are one of many examples where this curiosity shows up \cite[Example 4.3]{FOMIN2002119}. 
The relationship between frieze patterns and cluster algebras was later made more explicit in \cite{CC06}.
These developments reignited the interest in frieze patterns.\\
Since then, frieze patterns have been generalized in several ways. \\
Most results on frieze patterns require either implicitly (by allowing only non-zero entries) or explicitly that the frieze patterns are tame:
Tame frieze patterns over a commutative ring $R$ can be parametrized by semiclosed paths in the Farey complex $\mathcal{E}_R$ \cite[Theorem 1.3]{SSZ2025}. 
The variety of tame $SL_{k}$-frieze patterns of height $n$ was shown to be isomorphic to the variety of $n+k+1$ superperiodic linear difference equations of order $k$ \cite[Theorem 3.4.1]{morier2014linear}. It is also shown there that if $n+1$ and $k$ are coprime, these varieties are isomorphic to the moduli space of non-degenerate $n+k+1$-gons in $\mathbb{P}^{k-1}$
up to projective equivalence. All tame frieze patterns over the integers can be obtained from admissible labelings \cite[Theorem 7.5]{CH19}.

Wild frieze patterns have received comparably little attention. In \cite{Cun17} some examples of wild $SL_3$-frieze patterns were given. These examples show that wild frieze patterns in general will not have some of the nice properties of tame frieze patterns.
\\
Recently, Zabolotskii showed that the maximal possible density of wild entries in an $SL_2$-tiling over an integral domain is $\frac{2}{5}$ \cite{Zab25}.\\
\\
In this article we want to consider frieze patterns over the integers that are allowed to have wild entries.
First, we gather some useful facts about (tame) frieze patterns. 
In chapter \ref{section_wild} we observe that a wild frieze pattern can be obtained by
gluing together tame frieze patterns with common rows. This leads to the definition of the quiddity number of a frieze pattern.
For tame frieze patterns, this is just the sum of the entries of the quiddity cycle.
\newpage
Using this new invariant, we are able to classify the strongly connected components (which are the same as the weakly connected components) of the directed graph $\Gamma_{2,n}(\mathbb{Z})$ (See Definition \ref{definition Graph}):

\begin{Theorem}(Theorem \ref{connected component classification})
       Let $v$ and $w$ be rows of frieze patterns of height $n$. Then $v$ and $w$ are in the same strongly connected component of $\Gamma_{2,n}(\mathbb{Z})$ if and only if one of the two following conditions holds:
    \begin{enumerate}
        \item $Q(v)=Q(w)=\pm (3n+3)$, and $v$ and $w$ are both rows of the same (twisted) Conway-Coxeter frieze pattern.
        \item $Q(v)=Q(w)\neq \pm(3n+3)$.
    \end{enumerate}
    
\end{Theorem}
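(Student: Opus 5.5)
The plan is to prove the two directions separately, relying on the facts from Section \ref{section_wild}: the quiddity number $Q$ of a row is well defined, and it agrees with the sum of the quiddity cycle for every tame frieze pattern containing that row.

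\textbf{Invariance.} An edge $v\to w$ in $\Gamma_{2,n}(\mathbb{Z})$ means that $v$ and $w$ are consecutive rows of some (possibly wild) frieze pattern of height $n$. Such a pattern is obtained by gluing tame frieze patterns along common rows, and $Q$ is constant along each tame piece. Hence $Q(v)=Q(w)$ along every edge, so $Q$ is constant on weakly connected components. The Conway--Coxeter case needs a sharper statement. Take a row $v$ with $Q(v)=\pm(3n+3)$. Any tame frieze pattern with quiddity sum $\pm(3n+3)$ over $\mathbb{Z}$ is, up to the sign twist, a Conway--Coxeter frieze; I would take this from the classification of quiddity cycles with sum $3n+3$ via triangulations. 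I would then show that such a row admits no wild continuation. A Conway--Coxeter row has only nonzero (positive, up to twist) entries, so the next row is forced by the unimodular rule, and gluing can never leave the pattern. Consequently the component of $v$ consists exactly of the rows of that single (twisted) Conway--Coxeter frieze. Since the rows of a frieze form a cycle under the glide/shift, they are strongly connected, and this gives condition~1.

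\textbf{Connectivity for $Q\neq\pm(3n+3)$.} Here I must show that any two rows $v,w$ with the same value $q=Q(v)=Q(w)$ are joined by a directed path. Strong and weak connectivity coincide because every edge lies on a cycle: rows of a frieze pattern are periodic. So it suffices to connect each row, through wild gluings, to a fixed normal form row $r_q$. My approach has three steps:
\begin{enumerate}
\item Insert a row containing a zero, which permits a wild step.
\item Use the freedom at wild entries to change the tame piece while keeping $Q$ fixed. For instance, if the quiddity cycle of a tame piece contains an entry $0$, or a pair of entries that can be modified, one can move to a different tame frieze with the same quiddity sum.
\item Show that these moves act transitively on quiddity cycles with sum $q$, up to the equivalences already present in the graph.
\end{enumerate}
Tame friezes over $\mathbb{Z}$ correspond to admissible labelings and quiddity cycles whose matrix product is $-\mathrm{Id}$ \cite{CH19}. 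I would therefore use reduction moves on quiddity cycles: removing a $1$ and subtracting $1$ from its neighbours, inserting a $0$ pattern, and so on. The task is to show that every cycle with sum $q$ reduces to a canonical one, and that each move is realised by a path in $\Gamma_{2,n}(\mathbb{Z})$.

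\textbf{Main obstacle.} The hardest step is the transitivity argument in the third item. Two issues arise. First, the reductions must keep the height $n$ fixed, since the graph is for fixed $n$. Second, every tame piece, even those with all entries nonzero, must reach a row with a zero entry so that wild moves become available. The only exception should be the Conway--Coxeter friezes, which explains the special role of $\pm(3n+3)$. To handle the second issue, I would first show that a tame frieze over $\mathbb{Z}$ with no zero entries and quiddity sum $q$ satisfies $q=\pm(3n+3)$, using positivity arguments as in Conway--Coxeter. Every other frieze therefore contains a zero, and I would then induct on $\sum|a_i|$ to connect it to the normal form.
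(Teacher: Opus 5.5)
Your invariance direction and your treatment of $Q=\pm(3n+3)$ are fine and essentially agree with the paper. A quiddity sum of $\pm(3n+3)$ forces an admissible labeling with all labels $1$ (resp.\ all $-1$), so the frieze is (twisted) Conway--Coxeter. A row with no zero entries has a unique successor, so the component is exactly the set of rows of that frieze.

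The gap is in the connectivity direction, which is the real content of the theorem. Your step 3 only restates what has to be proved, and two ingredients are missing.

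\textbf{What a move does.} You never say precisely what a ``wild move'' does to the quiddity cycle. The paper's Proposition \ref{common row Lemma} supplies this. Two tame friezes share a row if and only if the following holds: cutting the quiddity cycle at the zeros of that row splits it into $\varepsilon_i$-quiddity cycles with $\prod\varepsilon_i=-1$, and the two friezes differ only by shifts $a_i$, $b_i$ of the junction entries with $\sum a_i=\sum b_i=0$. You only extract from $Q\neq\pm(3n+3)$ that some row contains a zero (via the classification of nonzero friezes). That alone does not tell you which other friezes are reachable.

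\textbf{Why the moves are transitive.} You give no argument that the relation ``share a row'' connects all tame friezes of height $n$ with the same sum. Your proposed reductions (deleting a $1$ and decrementing its neighbours, inserting zero patterns) change the length of the cycle, so they are not moves inside $\Gamma_{2,n}(\mathbb{Z})$. ``Induct on $\sum|a_i|$'' comes with no move that decreases this quantity at fixed $n$ while being realised by a common row. Nor is a normal form $r_q$ ever specified.

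The paper closes this gap by lifting to admissible labelings of the $(n+3)$-gon, where the moves become transparent:
\begin{enumerate}
\item Relabeling a polarized quadrilateral $(a,-a)\mapsto(b,-b)$ yields friezes with a common row (Lemma \ref{reassigning}). This is proved by cutting the polygon along the quadrilateral into four $\pm1$-admissible pieces and applying Proposition \ref{common row Lemma}.
\item Flipping the diagonal of a $(0,0)$ quadrilateral does not change the frieze (Lemma \ref{flip of 0s}).
\item One first normalizes all labels to $\pm1$, then uses Lawson's flip-connectivity of triangulations to match triangulations, and finally matches labels by an ear induction.
\end{enumerate}

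Note where $Q\neq\pm(3n+3)$ is genuinely used. It guarantees that a $\pm1$ labeling contains both a $1$ and a $-1$, i.e.\ a $(1,-1)$ polarized quadrilateral. This can be propagated along a chain of adjacent quadrilaterals to make any prescribed quadrilateral $(0,0)$, so that its diagonal can be flipped. Without some such mechanism at fixed $n$, your plan does not yet prove that equal quiddity number implies lying in the same component.
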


Using this classification, we are able to enumerate the strongly connected components in Proposition \ref{counting connected components}. 
Since (possibly wild) frieze patterns over $\mathbb{Z}$ correspond to infinite paths in $\Gamma_{2,n}(\mathbb{Z})$, this Theorem can be used to decide whether two tuples of integer can appear as rows of the same frieze pattern.
This graph (up to some minor modifications) was introduced in \cite{Cun17} in order to find examples of wild frieze patterns with the help of a computer. \\
In chapter \ref{section_properties} we further investigate $\Gamma_{2,n}(\mathbb{Z})$. We characterize loops and directed $2$-cycles in $\Gamma_{2,n}(\mathbb{Z})$ and show that every finite simple directed graph arises as an induced subgraph of $\Gamma_{2,n}(\mathbb{Z})$ for a sufficiently large $n$.
\section*{Acknowledgement}

I want to thank my advisor Michael Cuntz for his suggestion to look at wild frieze patterns and for many helpful discussions. I want to thank Thorsten Holm for introducing me to the topic of frieze patterns during my master's thesis. 
Furthermore, I want to thank Andrei Zabolotskii for pointing me to his slides about Farey surfaces of (wild) $SL_2$-tilings. \\
The author received funding through the RTG 2965 - From Geometry to Numbers: Moduli, Hodge Theory, Rational Points, Project number 512730679.\\
The results were obtained and the article was written by the (human) author and not by an AI tool.
\section{Generalities on frieze patterns}
\begin{Definition} \label{Def Frieze Pattern}
    Let $R$ be a commutative ring, $S\subseteq R$ and $n\in \mathbb{Z}_{\geq 0}$. A frieze pattern $\mathcal{F}$ of height $n$ over $S$ consists of entries $c_{i,j} \in S\cup\{0,1\}$ for $i,j \in \mathbb{Z}$, $i\leq  j\leq i+n+3$ such that \begin{enumerate}
        \item $c_{i,i}=c_{i,i+n+3}=0$ for all $i \in \mathbb{Z}$.
        \item $c_{i,i+1}=c_{i,i+n+2}=1$ for all $i \in \mathbb{Z}$.
        \item $c_{i,j}\in S$ for $i\in \mathbb{Z}$ and $i+2\leq j \leq n+1$.
        \item $c_{i,j}c_{i+1,j+1}-c_{i,j+1}c_{j+1,i}=1$ for all $i,j \in \mathbb{Z}$ $i\leq j \leq i+n+2$. \label{SL_2}
    \end{enumerate}
    \phantom{""}
    \\
    \\
    They can be depicted in the following way:
    \begin{align*}
    \begin{NiceMatrixBlock}[auto-columns-width]
        \begin{NiceMatrix}
             \ddots &&&&\ddots \\\\
            0&1&c_{i-1,i+1}&\dots &c_{i-1,i+n}&1&0\\\\
            &0&1&c_{i,i+2}&\dots&c_{i,i+n+1}&1&0\\\\
            &&0&1&c_{i+1,i+3} &\dots&c_{i+1,i+n+2} &1&0\\\\
            &&&&\ddots&&&&\ddots
        \end{NiceMatrix}
        \end{NiceMatrixBlock}
    \end{align*}
    We will refer to condition \ref{SL_2}. as the $SL_2$ rule.
    \\
    The $i$-th row of a frieze pattern consists of all the entries $c_{i,j}$ with $i\leq j \leq n+3$. Conversely, the $j$-th column of a frieze pattern consists of all entries $c_{i,j}$ with $j-n-3\leq i\leq j$. The $k$-th diagonal consists of the entries $c_{i,i+k}$ for $i\in \mathbb{Z}$.\\\\
    An entry $c_{i,j}$ with $i+2\leq j \leq i+n+1$ is called tame if 
    \begin{align*}
        \begin{vmatrix}
         c_{i-1,j-1}&c_{i-1,j}&c_{i-1,j+1}  \\
         c_{i,j-1}&c_{i,j}&c_{i,j+1} \\
         c_{i+1,j-1}&c_{i+1,j}&c_{i+1,j+1}
    \end{vmatrix}=0.
    \end{align*}
    A frieze pattern $\mathcal{F}$ is called tame if every entry $c_{i,j}$ with $i+2\leq j \leq i+n+1$ of $\mathcal{F}$ is tame. Otherwise, $\mathcal{F}$ is called wild.
\end{Definition}

\begin{Example}

\begin{itemize}
    \item Frieze patterns over $\mathbb{Z}_{>0}$ were classified by Conway and Coxeter in \cite{CC73a} and \cite{CC73b}. We will refer to them as Conway-Coxeter frieze patterns.
    \item  In \cite{Fon14} it was shown that each frieze pattern over $\mathbb{Z}\setminus\{0\}$ is either a Conway-Coxeter frieze pattern or is obtained from a Conway-Coxeter frieze pattern by multiplying every even diagonal by $-1$. Frieze patterns of the  second type only exist if $n$ is odd. We refer to them as twisted Conway-Coxeter frieze patterns. 
    \item We obtain a wild frieze pattern over $\mathbb{Z}$ by repeating the segment
    \begin{align}\label{Wild Example}
     \begin{NiceMatrixBlock}[auto-columns-width]
        \begin{NiceMatrix}
            \ddots&&&&&\ddots\\
            0&1&-3&-1&-3&-2&1&0\\
            &0&1&0&-1&-1&0&1&0\\
            &&0&1&6&5&-1&3&1&0\\
            &&&0&1&1&0&-1&0&1&0\\
            &&&&0&1&1&-3&-1&6&1&0\\
            &&&&&0&1&-2&-1&5&1&1&0\\
            &&&&&&0&1&0&-1&0&1&1&0\\
            &&&&&&&& \ddots&&&&&\ddots\\
        \end{NiceMatrix}
    \end{NiceMatrixBlock}
    \end{align}
    periodically. 
\end{itemize}
    
\end{Example}

\begin{Lemma} \label{tame entries}
    Let $\mathcal{T}$ be a tame frieze pattern over an integral domain $R$ with entries $c_{i,j}$. Each entry $c_{i,j}\neq 0$ is tame. If $c_{i,j}=0$, then it is tame if and only if $c_{i-1,j-1}+c_{i-1,j+1}+c_{i+1,j-1}+c_{i+1,j+1}=0.$

    \begin{proof}
        The first claim is a special case of \cite[Proposition 1]{BR10}.

        For the second claim (which also appears similarly in \cite{Zab25}) we compute
        \begin{align*}   
        &\phantom{\overset{x_{i,j}=0}{=}}
            \begin{vmatrix}
         c_{i-1,j-1}&c_{i-1,j}&c_{i-1,j+1}  \\
         c_{i,j-1}&c_{i,j}&c_{i,j+1} \\
         c_{i+1,j-1}&c_{i+1,j}&c_{i+1,j+1}
    \end{vmatrix}\\
         &\overset{\phantom{x_{i,j}=0}}{=}c_{i-1,j-1}\cdot
    \begin{vmatrix}
        c_{i,j}&c_{i,j+1}\\
        c_{i+1,j}&c_{i+1,j+1}
    \end{vmatrix}-c_{i,j-1}\cdot \begin{vmatrix}
        c_{i-1,j}&c_{i-1,j+1}  \\
        c_{i+1,j}&c_{i+1,j+1}
    \end{vmatrix}
    +c_{i+1,j-1}\begin{vmatrix}
        c_{i-1,j}&c_{i-1,j+1}\\
        c_{i,j}&c_{i,j+1} 
    \end{vmatrix}\\
    &\overset{\phantom{c_{i,j}=0}}{=}c_{i-1,j-1}+c_{i+1,j-1} -c_{i,j-1}c_{i-1,j}c_{i+1,j+1}+c_{i,j-1}c_{i+1,j}c_{i-1,j+1}\\
    &\overset{c_{i,j}=0}{=}c_{i-1,j-1}+c_{i+1,j-1} +(c_{i,j}c_{i+1,j+1}-c_{i,j-1}c_{i-1,j})c_{i+1,j+1}+(c_{i,j-1}c_{i+1,j}-c_{i-1,j-1}c_{i,j})c_{i-1,j+1}\\
     &\overset{\phantom{c_{i,j}=0}}{=}c_{i-1,j-1}+c_{i+1,j-1}+c_{i+1,j-1}+c_{i+1,j+1}.
        \end{align*}
    We used the $SL_2$ rule for the second and fourth equality.
    \end{proof}
\end{Lemma}

\begin{Definition} (\cite[(4-3)]{CH09} and \cite[Definition 6.1]{CH19})
    Let $R$ be a commutative ring and $c \in R$. We write $\eta(c):= 
    \begin{pmatrix}
        c&-1\\
        1 &0
    \end{pmatrix}$ .
    Let $\varepsilon \in \{\pm1\}$.
    A tuple $(c_1,\dots,c_m)\in R^{m}$ is called an $\varepsilon$-quiddity cycle (over $R$) if 
    \begin{equation*}
        \prod_{i=1}^{m}\eta(c_i)=\begin{pmatrix}
        \varepsilon&0\\
        0&\varepsilon
    \end{pmatrix}.
    \end{equation*} 
    We usually refer to $-1$-quiddity cycles simply as quiddity cycles.
\end{Definition}

\begin{Remark}\label{Frieze quiddity correspondence}
    The entries of a tame frieze pattern of height $n$ satisfy a glide symmetry: \\$c_{i,j}=c_{j,i+n+3}$ \cite[Corollary 22]{BR10}. In particular, this tells us that the entries are $n+3$-periodic.\\
    By \cite[Proposition 2.4]{CH19} tame frieze patterns of height $n$ correspond to quiddity cycles of length $n+3$.
    If a tame frieze pattern $\mathcal{T}$ has entries $c_{i,j}$, then $(c_{1,3},\dots,c_{n+3,n+5})$ is a quiddity cycle. We will refer to this as the quiddity cycle of $\mathcal{T}$. Moreover, the other entries of the tame frieze pattern can be recovered from the quiddity cycle through the equation
    \begin{equation}
        c_{i,j+2}=\left( \prod_{k=i}^{j}\eta(c_{k,k+2})
        \right)_{1,1}.
    \end{equation}
    One can modify the proof of \cite[Proposition 2.4]{CH19} to show that $1$-quiddity cycles correspond to infinite arrays of the form 
    \begin{align}
    \begin{NiceMatrixBlock}[auto-columns-width]\label{-1 frieze pattern}
        \begin{NiceMatrix}
             \ddots &&&&&\ddots \\\\
            0&1&c_{i-1,i+1}&\dots &c_{i-1,i+n}&-1&0\\\\
            &0&1&c_{i,i+2}&\dots&c_{i,i+n+1}&-1&0\\\\
            &&0&1&c_{i+1,i+3} &\dots&c_{i+1,i+n+2} &-1&0\\\\
            &&&&\ddots&&&&\ddots
        \end{NiceMatrix}
        \end{NiceMatrixBlock}
    \end{align}
    satisfying the $SL_2$ rule.\\
    \\
    There is a group action of the dihedral group $D_{n+3}$ on the set of tame frieze patterns of height $n$ because quiddity cycles can be both rotated and reversed. (See for example \cite[Remark 2.6]{CH19} for more details.)
\end{Remark}

\begin{Lemma}\label{quiddity from rows}
    Let $\mathcal{T}$ be the tame frieze pattern over an integral domain $R$ containing the two rows
    \begin{align*}
        \begin{NiceMatrixBlock}[auto-columns-width]
            \begin{NiceMatrix}
            0&1&c_{i,i+2}&\dots&c_{i,i+n+1}&1&0  \\
            &0&1&c_{i+1,i+3}&\dots&c_{i+1,i+n+2}&1&0
        \end{NiceMatrix}   .
    \end{NiceMatrixBlock}
    \end{align*}
    Then the entries of the quiddity cycle $(c_{1,3},\dots,c_{n+3,n+5})$ of $\mathcal{T}$ can be computed from these rows by 
    \begin{align*}
        c_{j,j+2}=
        \begin{cases}
            \frac{c_{i,j}+c_{i,j+2}}{c_{i,j+1}}&\text{ if } c_{i,j+1}\neq 0\\
            \frac{c_{i+1,j}+c_{i+1,j+2}}{c_{i+1,j+1}}&\text{ if }c_{i+1,j+1}\neq 0
        \end{cases}
    \end{align*}
    and the periodicity $c_{i,i+2}=c_{i+n+3,i+n+5}$.
    \begin{proof}
        The entries of a tame frieze pattern satisfy the recursion 
        \begin{equation}\label{recursion}
        c_{i,j+2}=c_{j,j+2}c_{i,j+1}-c_{i,j}.   
        \end{equation}
        (See for example the proof of \cite[Proposition 2.4]{CH19}.)
        At most one of $c_{i,j+1}$ and $c_{i+1,j+1}$ can be $0$ because otherwise the entries would violate the $SL_2$ rule.
        If $c_{i,j+1}\neq0$, we can rearrange \eqref{recursion}  to obtain $c_{j,j+2}=\frac{c_{i,j}+c_{i,j+2}}{c_{i,j+1}}$. Otherwise, we use the recursion with $i+1$ instead of $i$.\\
        This way, we obtain the values for the entries $c_{j,j+2},\dots,c_{j+n+3,j+n+5}$. The periodicity then gives the values for the quiddity cycle.
    \end{proof}
\end{Lemma}

\begin{Lemma} \label{two rows determine tame frieze}
    Let the segment
    \begin{align}\label{two_rows}
    \begin{NiceMatrixBlock}[auto-columns-width]
    \begin{NiceMatrix}
         0&1&c_{i,i+2}&\dots&c_{i,i+n+1}&1&0  \\
         &0&1&c_{i+1,i+3}&\dots&c_{i+1,i+n+2}&1&0
    \end{NiceMatrix}
     \end{NiceMatrixBlock}
    \end{align}
    with entries in $\mathbb{Z}$  satisfy the $SL_2$ condition everywhere. Then there is a unique tame frieze pattern $\mathcal{T}$ over $\mathbb{Z}$ that contains
    \eqref{two_rows}
     as the $i$-th and $i+1$st row.
\begin{proof}
    We first show that there is such a tame frieze pattern over $\mathbb{Q}$.
    We can compute the entries $c_{i+2,i+4},\dots,c_{i+2,i+n+3}$ of the third row from left to right as follows. \\If $c_{i+1,j-1} \neq 0$, then $c_{i+2,j}=\frac{c_{i+1,j}c_{i+2,j-1}+1}{c_{i+1,j-1}}$.\\ Otherwise, by Lemma \ref{tame entries} we have $c_{i+2,j}=-(c_{i,j-2}+c_{i,j}+c_{i+2,j-2})$. We can proceed the same way for all rows below. The rows above can either be computed similarly or are obtained from the entries below by using the periodicity.
    \\
    It remains to show that all the entries actually lie in $\mathbb{Z}$. By Remark \ref{Frieze quiddity correspondence} it is enough to show that the quiddity cycle entries lie in $\mathbb{Z}$. We can write $c_{j,j+2}=\frac{p}{q}$ with $p,q \in \mathbb{Z}$ coprime. 
    The recursion \eqref{recursion} shows that 
    \begin{align*}
        qc_{i,j+2}+qc_{i,j}=pc_{i,j+1}.    
    \end{align*}
    Since $p$ and $q$ are coprime, we have $q \mid c_{i,j+1}$. Similarly, we get that $q \mid c_{i+1,j+1}$. Together this shows that $q \mid (c_{i,j}c_{i+1,j+1}-c_{i,j+1}c_{i+1,j})$. But this is $1$ by the $SL_2$ rule and it follows that $q=\pm1$ and $c_{j,j+2} \in \mathbb{Z}$.
\end{proof}
\end{Lemma}

\begin{Remark}
    The proof of Lemma \ref{two rows determine tame frieze} shows that a frieze pattern with nonzero entries is already determined by one row. This is because the upper row is only used if $c_{i+1,j-1}=0$. Alternatively, this follows from the observation that all entries are evaluations of cluster variables of a type $\mathcal{A}_n$ cluster algebra and the fact that all cluster variables can be expressed as a Laurent polynomial in the cluster variables of an initial cluster \cite[Theorem 3.1]{FZ02}.
\end{Remark}
\hspace{30pt}
\\
We now introduce admissible labelings, which we will later need to prove our main result.

In \cite{CC73a} and \cite{CC73b} Conway and Coxeter established a bijection between frieze patterns of height $n$ over $\mathbb{Z}_{\geq1}$ and triangulations of convex $(n+3)$-gons by noncrossing diagonals. This was generalized in \cite{CH19} to describe tame frieze patterns over $\mathbb{Z}$.

From now on we will simply write triangulation instead of triangulation by noncrossing diagonals.

\begin{Definition}\cite[Definition 7.1]{CH19}
   For $m \in \mathbb{Z}_{\geq 2}$ let $T$ be a triangulation of a convex $m$-gon. A labeling of $T$ is an assignment of integers $a_t$ to each triangle $t$. Let $d$ be the sum of the number of negative labels and half the the number of $0$ labels.
   We call a labeling admissible if it satisfies the following two conditions.
   \begin{itemize}
       \item The set of triangles $t$ with $a_t\notin\{\pm1\}$ can be partitioned into two element sets $\{t_1,t_2\}$ such that $t_1$ and $t_2$ have a common edge and $a_{t_1}=-a_{t_2}$.
       \item $d$ is even.
   \end{itemize}
\end{Definition}
The first condition ensures that $d$ is an integer because triangles labeled $0$ only show up in pairs.\\ 
We will sometimes need a bit more flexibility. Therefore, we generalize the definition of admissible labelings.
\newpage
\begin{Definition}
    Let $L$ be a labeling of a triangulation $T$. Let $\varepsilon \in \{\pm 1\}$ We call $L$ $\varepsilon$-admissible if it satisfies the following conditions.
    \begin{itemize}
       \item The set of triangles $t$ with $a_t\notin\{\pm1\}$ can be partitioned into two element sets $\{t_1,t_2\}$ such that $t_1$ and $t_2$ have a common edge and $a_{t_1}=-a_{t_2}$.
       \item $(-1)^d=\varepsilon$.
   \end{itemize}
\end{Definition}

Therefore, $1$-admissible is the same as admissible. The $-1$-admissible labelings are labelings that satisfy the first condition of admissible labelings but fail to satisfy the second one.

\begin{Definition}
    Let $L$ be an admissible labeling. We call a tuple $\{t_1,t_2\}$ a  quadrilateral  
    if $t_1$ are $t_2$ are adjacent triangles in the underlying triangulation of $L$.
    If $t_1$ is labeled $a$ and $t_2$ is labeled $-a$ for some $a \in \mathbb{Z}$, we call $\{t_1,t_2\}$ a polarized  quadrilateral of the labeling.
\end{Definition}

\begin{Example}
Below we see four labelings with the same underlying triangulation. The first labeling (from left to right, top to bottom) is admissible. The second labeling is $-1$-admissible. The third labeling is neither admissable nor $-1$-admissible because there is a label $3$ but no label $-3$. The fourth labeling is also neither admissable nor $-1$-admissible because the triangles labeled $3$ and $-3$ have no common edge.
    \begin{figure}[H]
        \caption{$\pm1$-admissible labelings}
        \centering
        \vspace*{0.25in}
        \includegraphics[width=0.8\linewidth]{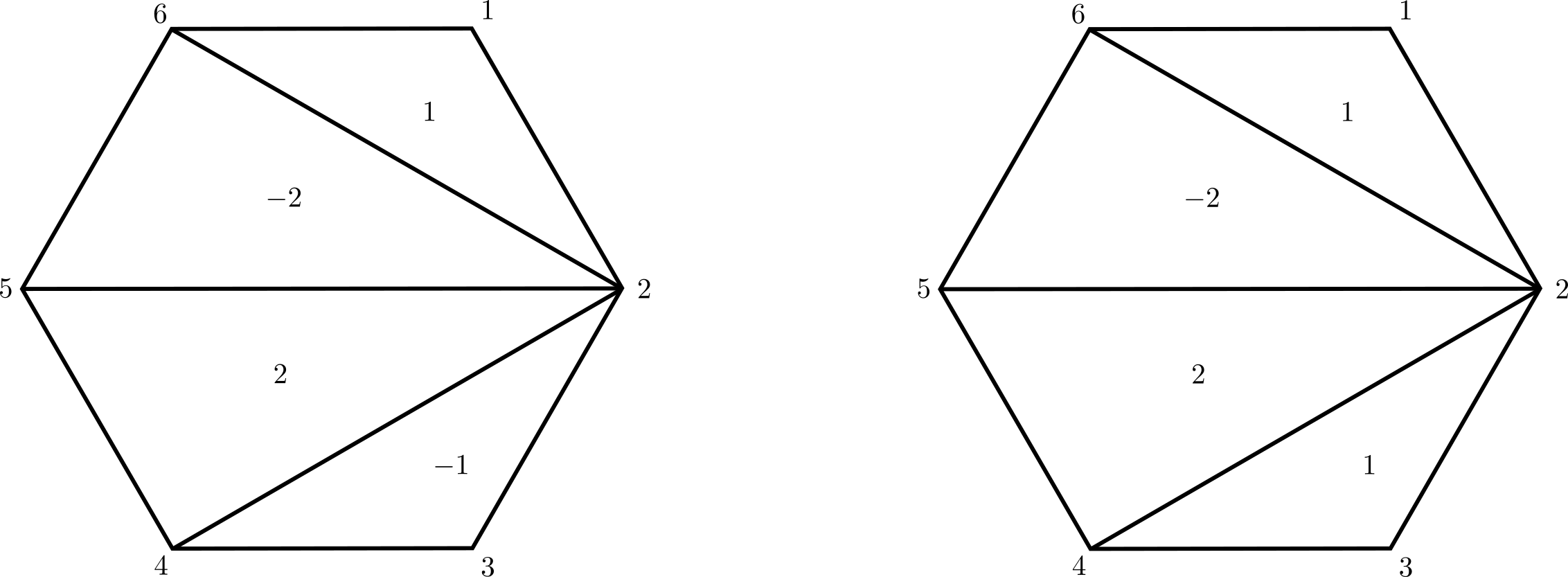}
        \vspace*{0.25in}
        \label{fig:admissible Labelings}
        \caption{non admissible labelings}
        \vspace*{0.25in}
         \includegraphics[width=0.8\linewidth]{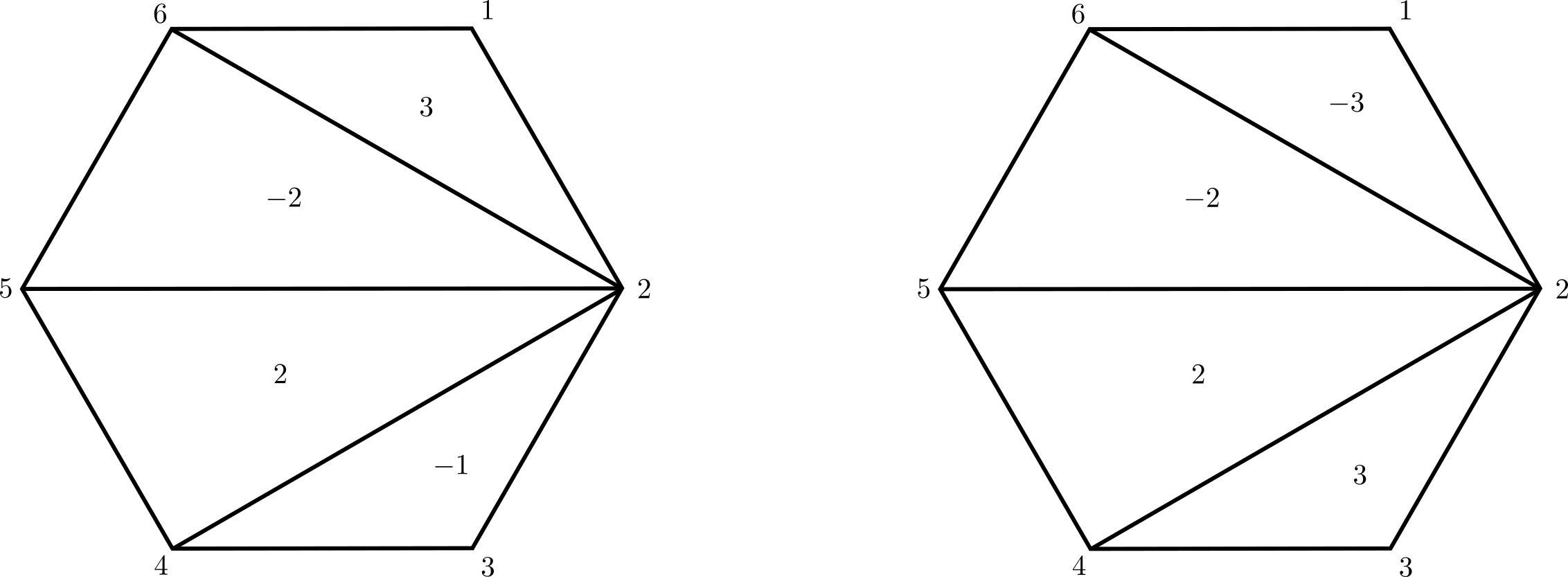}
        \label{fig:Non admissible Labeling 1}
       
    \end{figure}
\end{Example}
\newpage
\begin{Theorem}\cite[Theorem 7.5]{CH19}\label{quiddity cycle from labeling}
  \begin{enumerate}[label=(\alph*)]
      \item Let $T$ be a triangulation of an $m$-gon for some $m \in \mathbb{Z}_{\geq 2}$ with vertices labeled $1,\dots,m$ (in clockwise order). Let $L$ be an admissible labeling of $T$. For $i=1,\dots,m$ let $c_{i}$ be the sum of the labels of triangles adjacent to the vertex $i$. Then $(c_1,\dots,c_m)$ is the quiddity cycle of a tame frieze pattern over $\mathbb{Z}$.
      \item Every quiddity cycle $(c_1,\dots,c_m)$ can be obtained from an admissible labeling in this way.
  \end{enumerate}
\end{Theorem}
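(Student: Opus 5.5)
The plan is to prove both parts by induction on $m$, working throughout with $\varepsilon$-quiddity cycles and $\varepsilon$-admissible labelings so that signs can be tracked through the induction. The first thing to check is the sign convention on the smallest cases: $m=2$ (empty triangulation, cycle $(0,0)$, $\eta(0)^2=-I$), $m=3$ (one triangle labeled $\pm1$) and $m=4$ (a single polarized quadrilateral). For instance, for a quadrilateral labeled $a,-a$ the vertex sums are $(a,0,-a,0)$, and a direct computation gives
\begin{equation*}
\eta(a)\eta(0)\eta(-a)\eta(0)=I .
\end{equation*}
Here $d=1$, so the correct bookkeeping is that a labeling with parameter $d$ yields the product $(-1)^{d+1} I$. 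I will therefore prove the sharper statement that the vertex sums of a labeling with parameter $d$ form a $(-1)^{d+1}$-quiddity cycle; part (a) is then the case $d$ even.

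For (a), I group the triangles of $L$ into blocks: the triangles labeled $\pm1$ that lie in no chosen pair, and the polarized quadrilaterals. This gives a dissection of the $m$-gon into triangles and quadrilaterals, and its dual tree has a leaf, i.e. an ear block cut off by a single diagonal. There are three cases.
\begin{itemize}
\item A triangle ear labeled $1$: use the identity $\eta(a+1)\eta(1)\eta(b+1)=\eta(a)\eta(b)$, which I verified directly. Removing the ear leaves the product unchanged, and $d$ is unchanged.
\item A triangle ear labeled $-1$: the analogous identity $\eta(a-1)\eta(-1)\eta(b-1)=-\eta(a)\eta(b)$ shows the sign flips, and indeed $d$ changes by one.
\item A quadrilateral ear labeled $a,-a$: the two outer vertices receive $\pm a$ or $0$, depending on which diagonal is used, and the endpoints of the cutting edge receive extra contributions. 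Here I would use the collapse identity $\eta(x)\eta(0)\eta(y)=-\eta(x+y)$, applied twice, together with the identity for $\eta(x)\eta(a)\eta(0)\ldots$. Either way the sign change matches the change of $d$ by one.
\end{itemize}
Induction then finishes (a).

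For (b), I reverse the process. The key lemma is that every $\varepsilon$-quiddity cycle of length $m\ge 3$ contains an entry in $\{-1,0,1\}$. To prove it I would argue by contradiction: suppose all $|c_i|\ge 2$. Then, starting from $e_1$, the vectors $v_k=\prod_{i\le k}\eta(c_i)\,e_1$ have strictly increasing norms, in the continued-fraction sense $|(v_k)_1|>|(v_{k-1})_1|$. This contradicts the product being $\pm I$.

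With the lemma in hand, I reduce. If some $c_i=\pm1$, I apply the ear identity backwards, getting a shorter cycle $(\dots,c_{i-1}\mp1,c_{i+1}\mp1,\dots)$. By induction this shorter cycle comes from a labeling, and I glue a triangle labeled $\pm1$ onto the edge $\{i-1,i+1\}$. If instead $c_i=0$, I collapse $(\dots,x,0,y,\dots)$ to $(\dots,x+y,\dots)$, which shortens the cycle by $2$ and flips the sign.

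The hard step is undoing this last collapse on the level of labelings. One vertex $v$ with vertex sum $x+y$ must be split into two vertices with sums $x$ and $y$, while inserting two new triangles that give sum $0$ at the new vertex. My first attempt would be to glue a polarized quadrilateral labeled $k,-k$ onto one of the edges at $v$, with its diagonal emanating from the new $0$-vertex, and to choose $k$ so that the sums at the two resulting copies of $v$ become $x$ and $y$. The difficulty is that the triangles already incident to $v$ must be distributed between its two copies. If no choice of $k$ works with a single quadrilateral, I would instead first use the $\pm1$ reduction to arrange that $v$ lies in a position where this distribution is forced, for example when $v$ has only one incident block. I expect this case analysis to be the main obstacle.
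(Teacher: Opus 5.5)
The paper gives no proof of this statement; it is quoted from \cite{CH19}. Your strategy is the natural one: every $\varepsilon$-quiddity cycle over $\mathbb{Z}$ has an entry in $\{0,\pm1\}$, and such an entry lets you split off a triangle labeled $\pm1$ or a polarized quadrilateral. This amounts to decomposing a quiddity cycle into the pieces $(0,0)$, $(1,1,1)$, $(-1,-1,-1)$, $(a,0,-a,0)$. Part (a), the bookkeeping $(-1)^{d+1}$, the growth lemma and the $\pm1$ case of (b) are all correct. For the quadrilateral ear in (a), a single use of $\eta(x)\eta(0)\eta(y)=-\eta(x+y)$ suffices. The ear turns $(\dots,U,W,\dots)$ into $(\dots,U,\pm a,0,W\mp a,\dots)$ or $(\dots,U\pm a,0,\mp a,W,\dots)$. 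The gap is the step you flag yourself. For a $0$ entry you never construct a labeling of the $m$-gon from the labeling $L'$ of the $(m-2)$-gon. The fallback you sketch, first using $\pm1$ reductions to put $v$ into a special position, is not an argument. So as written, (b) is not proved.

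The obstacle you anticipate is not real, and your first attempt works verbatim. The triangles at $v$ never have to be distributed, because the free integer label of the new quadrilateral absorbs the difference. Write the cycle as $(\dots,x,0,y,s,\dots)$, so that in $L'$ the vertex $v$ has sum $x+y$ and its neighbour $q$ on the side of $y$ has sum $s$.
\begin{itemize}
\item Glue a quadrilateral with vertices $v,z,v',q$ onto the boundary edge $\{v,q\}$ and triangulate it by the diagonal $\{z,q\}$.
\item Label $\{v,z,q\}$ by $-y$ and $\{z,v',q\}$ by $y$.
\item All old triangles remain at $v$, so the new vertex sums are $x+y-y=x$ at $v$, $-y+y=0$ at $z$, $y$ at $v'$, and $s-y+y=s$ at $q$. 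All other sums are unchanged. On the level of cycles this is just gluing $(-y,0,y,0)$ onto the shorter cycle.
\item The two new triangles share the edge $\{z,q\}$ and carry opposite labels, so the pairing condition survives. For $y=\pm1$ they need not be paired at all.
\item $d$ grows by exactly one, matching the sign flip.
\end{itemize}
You do not even need to track $d$ in (b). Your sharpened (a) forces $(-1)^{d+1}=\varepsilon$ as soon as the vertex sums are right, so for $\varepsilon=-1$ the labeling is admissible. The construction also covers $m=4$, where $L'$ is the empty labeling of a $2$-gon and necessarily $x=-y$, $s=0$. With this step inserted your proof is complete.
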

\begin{Remark}\label{extension to 1-labelings}
    Theorem \ref{quiddity cycle from labeling} can be adapted to $-1$-admissible labelings. If we perform the same steps, we will obtain a $1$-quiddity cycle. Moreover, each $1$-quiddity cycle can be obtained this way.
\end{Remark}

\begin{Definition}
    Let $n \in \mathbb{Z}_{\geq 0}$. Let $L$ be a admissible labeling of an $n+3$-gon. By Lemma \ref{quiddity cycle from labeling}, this labeling will yield a quiddity cycle $(c_1,\dots,c_{n+3})$. We write $f_n$ for the map sending an admissible labeling to the tame frieze pattern corresponding to the quiddity cycle obtained this way.
\end{Definition}

\begin{Remark}\label{map remark}
    Because of Theorem \ref{quiddity cycle from labeling} b), the map $f_n$ is surjective. The map is not injective \cite[Remark 7.4 (5)]{CH19}. If we restrict the map to admissible labelings where every triangle is labeled $1$, we recover the bijection from \cite{CC73a}.
\end{Remark}

\label{section_tame}
\section{Wild frieze patterns}

In this section we consider frieze patterns that are allowed to have wild entries.

\begin{Definition}
    Let $\mathcal{F}$ be a frieze pattern of height $n$ with entries $x_{i,j}$.\\ We write $\rho_i(\mathcal{F}):=(0,1,x_{i,i+2},\dots,x_{i,i+n+1},1,0)$ for the $i$-th row of $\mathcal{F}$.
\end{Definition}

\begin{Proposition}\label{wild frieze patterns are sequences of tame frieze patterns}
    There is a bijection between frieze patterns of height $n$ over $\mathbb{Z}$ and bi-infinite sequences $(\mathcal{F}_k)_{k\in \mathbb{Z}}$ of tame frieze patterns of height $n$ over $\mathbb{Z}$ such that $\rho_{k+1}(\mathcal{F}_k)=\rho_{k+1}(\mathcal{F}_{k+1})$ for all $k \in \mathbb{Z}$.

    \begin{proof}
        Let $\mathcal{F}$ be a frieze pattern of height with entries $x_{i,j}$ in $\mathbb{Z}$. We construct a sequence of tame frieze patterns as follows: For $k\in \mathbb{Z}$ consider two consecutive rows
        \begin{align*}
        \begin{NiceMatrixBlock}[auto-columns-width]
          \begin{NiceMatrix}
         0&1&x_{k,k+2}&\dots&x_{k,k+n+1}&1&0  \\
         &0&1&x_{k+1,k+3}&\dots&x_{k+1,k+n+2}&1&0.
   \end{NiceMatrix} 
    \end{NiceMatrixBlock}
    \end{align*}
By Lemma \ref{two rows determine tame frieze}, there is a tame frieze pattern $\mathcal{T}^{\mathcal{F}}_k$ where this segment appears as the $k$-th and $k+1$st row.
Moreover, it follows from the construction that $\rho_{k+1}(\mathcal{T}^{\mathcal{F}}_k)=\rho_{k+1}(\mathcal{T}^{\mathcal{F}}_{k+1})$.
\\
\\
If we start with a sequence $(\mathcal{G}_k)_{k\in \mathbb{Z}}$ of tame frieze patterns over $\mathbb{Z}$ satisfying $\rho_{k+1}(\mathcal{G}_k)=\rho_{k+1}(\mathcal{G}_{k+1})$ for all $k \in \mathbb{Z}$, we can get a frieze pattern $\mathcal{W}((\mathcal{G}_k)_{k\in \mathbb{Z}})$ by choosing the $k$-th row of $\mathcal{G}_k$ as the $k$-th row of $\mathcal{W}((\mathcal{G}_k)_{k\in \mathbb{Z}})$.

Since $\rho_{k+1}(\mathcal{G}_k)=\rho_{k+1}(\mathcal{G}_{k+1})$ and $\mathcal{G}_{k+1}$ satisfies the $SL_2$ condition, each $2\times2$ determinant 
\begin{align*}
    \begin{vmatrix}
     x_{i,j}&x_{i,j+1}  \\
     x_{i+1,j}&x_{i+1,j+1} 
\end{vmatrix}
\end{align*}
 equals $1$.
\\
We can now check that these two constructions are inverse to each other.
First, let $(\mathcal{G}_k)_{k\in \mathbb{Z}}$ be a sequence of tame frieze patterns with $\rho_{k+1}(\mathcal{G}_k)=\rho_{k+1}(\mathcal{G}_{k+1})$ for all $k \in \mathbb{Z}$.

The $k$-th and $k+1$st row of $\mathcal{W}((\mathcal{G}_k)_{k\in \mathbb{Z}}$ are the same as in $\mathcal{G}_k$. Since $\mathcal{T}_k^{\mathcal{W}((\mathcal{G}_k)_{k\in \mathbb{Z}})}$ is defined by these two rows, Lemma \ref{two rows determine tame frieze} shows $\mathcal{T}_k^{\mathcal{W}((\mathcal{G}_k)_{k\in \mathbb{Z}})}=\mathcal{G}_k$ for all $k \in \mathbb{Z}$ and the two bi-infinite sequences agree.
\\
\\
Conversely, let $\mathcal{F}$ be a frieze pattern of height $n$ over $\mathbb{Z}$.
Then the $k$-th row of $\mathcal{T}_k^{\mathcal{F}}$ is the same as the $k$-th row of $\mathcal{F}$. This row will also become the $k$-th row of $\mathcal{W}((\mathcal{T}_k^{\mathcal{F}})_{k\in \mathbb{Z}})$. So $\mathcal{W}((\mathcal{T}_k^{\mathcal{F}})_{k\in \mathbb{Z}})=\mathcal{F}$.
    \end{proof}
\end{Proposition}

\begin{Remark}
    If $\mathcal{F}$ is already tame, then $\mathcal{T}^{\mathcal{F}}_k=\mathcal{F}$ for every $k \in \mathbb{Z}$. 
    If $\mathcal{T}^{\mathcal{F}}_k=\mathcal{T}^{\mathcal{F}}_{k+1}=\dots=\mathcal{T}^{\mathcal{F}}_{k+l}$, then the frieze pattern $\mathcal{F}$ is tame between row $k+1$ and $k+l$. If $\mathcal{T}^{\mathcal{F}}_k\neq \mathcal{T}^{\mathcal{F}}_{k+1}$, then there is some wild entry in row $k+1$ of $\mathcal{F}$.
\end{Remark}

Proposition \ref{wild frieze patterns are sequences of tame frieze patterns} basically tells us that we can glue together tame frieze patterns with common rows to get wild ones. The natural next step is to characterize frieze patterns that share a row. In other words we would like to understand the preimages of $\rho_i$. Since tame frieze patterns can be rotated, it suffices to consider tame frieze patterns that have a common first row.

\begin{Proposition}\label{common row Lemma}
Let $\mathcal{T}$ and $\tilde{\mathcal{T}}$ be two tame frieze patterns over $\mathbb{Z}$. Then $\rho_1(\mathcal{T})=\rho_1(\tilde{\mathcal{T}})$ if and only if there are $r \in \mathbb{Z}_{\geq 1}$, $\varepsilon_i \in \{\pm 1\}$ with $\prod_{i=0}^{r}\varepsilon_i=-1$, $a_i,b_i \in \mathbb{Z}$ with 
$\sum^r_{i=0}a_i=\sum^r_{i=0}b_i=0$ and $\varepsilon_i$ quiddity cycles $(c_{i,1},\dots,c_{i,m_i})$ for $i=0,\dots,r$ 
such that 
\begin{align*}
    (c_{0,1},\dots,c_{0,m_0-1},c_{0,m_0}+a_0,\dots\dots,c_{r,1},\dots,c_{r,m_r-1},c_{r,m_r}+a_{r})
\end{align*}
\normalsize
and 
\begin{align*}
    (c_{0,1},\dots,c_{0,m_0-1},c_{0,m_0}+b_1,\dots\dots,c_{r,1},\dots,c_{r,m_r-1},c_{r,m_r}+b_{r})
\end{align*}
are the quiddity cycles of $\mathcal{T}$ and $\tilde{\mathcal{T}}$ respectively.
\begin{proof}
First, assume that $\mathcal{T}$ and $\tilde{\mathcal{T}}$ have the same first row.
    Because of the $SL_2$ rule, a frieze pattern over $\mathbb{Z}$ looks like 
    \begin{align*}
        \begin{NiceMatrix}
            &\pm1&\\
            \mp1&0&\pm1\\
            &\mp1&&
        \end{NiceMatrix}
    \end{align*} 
    around each $0$.
    Therefore, we can write the common first line as
    \begin{align*}
        \rho_1(\mathcal{T})=\rho_1(\tilde{\mathcal{T}})=(0,1,x_{0,1},\dots,x_{0,n_0},-\mu_1,0,\mu_1,\dots,-\mu_r,0,\mu_r,x_{r,1},\dots,x_{r,n_r},1,0) 
    \end{align*}
    with $\mu_i \in \{\pm1\}$ and none of the $x_{i,j}$ equal to $0$.
    Now consider a third tame frieze pattern with the same first row and with the second row
    \begin{align*}
        (0,1,y_{0,1},\dots,y_{0,n_0},-\mu_1,0,\mu_1,\dots,-\mu_r,0.\mu_r,y_{r,1},\dots,y_{r,n_r},1,0).
    \end{align*}
    The $y_{i,j}$ are uniquely determined by the $x_{i,j}$ and the $\mu_i$ through the $SL_2$ rule.\\
    
    We can now excise parts of this third tame frieze pattern to obtain infinite arrays
    \begin{align}\label{excised part}
        \begin{NiceMatrix}
            \ddots&&&&&\ddots\\
            0&\mu_i&x_{i,1}&\dots&x_{i,n_i}&-\mu_{i+1}&0\\
            &0&\mu_i&y_{i,1}&\dots&y_{i,n_i}&-\mu_{i+1}&0\\
            &&&\ddots&&&&\ddots
        \end{NiceMatrix}.
    \end{align}
    If we set $\mu_0=-\mu_{r+1}=1$, this also accounts for the left- and rightmost part.
    These excised parts can be extended similarly as in Lemma \ref{two rows determine tame frieze}. 
    After multiplying every entry by $\mu_i$, these will either be a frieze pattern or of the form \eqref{-1 frieze pattern}, depending on whether $\mu_i=\mu_{i+1}$ or not.
    This corresponds to an $\varepsilon_i:=\mu_{i}\mu_{i+1}$-quiddity cycle.
    Since
    \begin{align*}
        \frac{x_{i,j-1}+x_{i,j+1}}{x_{i,j}}=\frac{\mu_ix_{i,j-1}+\mu_ix_{i,j+1}}{\mu_ix_{i,j}}
    \end{align*}
    (and similarly for the second row), we can now use Lemma \ref{quiddity from rows} to write the quiddity cycle of the third tame frieze pattern as 
    \begin{align*}
    (c_{0,1},\dots,c_{0,m_0},c_{1,1},\dots,c_{1,m_1},\dots\dots,c_{r-1,1},\dots,c_{r-1,m_{r-1}},c_{r,1},\dots,c_{r,m_r})
\end{align*}
where $(c_{i,1},\dots,c_{i,m_i})$ are the $\mu_i\mu_{i+1}$ quiddity cycles corresponding to the excised parts \eqref{excised part}. Now, as long as $x_{i,j+1}$ is not $0$, by Lemma \ref{quiddity from rows} the $j$-th entries of the quiddity cycles of $\mathcal{T}$ and $\tilde{\mathcal{T}}$ are both the same as the $j$-th entry of the third quiddity cycle. Now we need to understand what happens at the zeroes. We look at the parts of $\mathcal{T}$ between two zeroes in the first row and the part of the second row underneath it.

\begin{align*}
        \begin{NiceMatrix}
            \ddots&&&&&\ddots\\
            0&\mu_i&x_{i,1}&\dots&x_{i,n_i}&-\mu_{i+1}&0\\
            -\mu_i&w_{i,-1}&w_{i,0}&w_{i,1}&\dots&w_{i,n_i}&-\mu_{i+1}&w_{i+1,-1}\\
            &&&\ddots&&&&\ddots
        \end{NiceMatrix}
    \end{align*}

We claim that we always have $w_{i,j}=y_{i,j}+\mu_ix_{i,j+1}w_{i,-1}$. \\
(We set $x_{i,0}:=\mu_i,x_{i,n_i+1}:=-\mu_{i+1},y_{i,0}:=\mu_i,y_{i,-1}:=0$.)
In order to prove this claim we proceed by induction on $j$.
Indeed this holds for $j=-1$ because $y_{i,-1}=0$ and $x_{i,0}=\mu_i=\mu_i^{-1}$. Now assume that it holds for some $j\in \{-1,0,1,\dots,n_i-1\}$. Then we have \begin{align*}
    \begin{vmatrix}
        x_{i,j+1}&x_{i,j+2} \\
        x_{i,j+1}\mu_{i}w_{i,-1}+y_{i,j}&w_{i,j+1}
    \end{vmatrix}=
    \begin{vmatrix}
        x_{i,j+1}&x_{i,j+2} \\
        y_{i,j}&y_{i,j+1}
    \end{vmatrix}&=1
\end{align*}
by the $SL_2$ rule.
Thus, we get
\begin{align*}
    \begin{vmatrix}
        x_{i,j+1}&x_{i,j+2} \\
        x_{i,j+1}\mu_{i}w_{i,-1}&w_{i,j+1}-y_{i,j+1}
    \end{vmatrix}&=0
\end{align*}
from the multilinearity of the determinant.
Since $x_{i,j+1}\neq 0$, the second row needs to be a scalar multiple of the first. 
It follows that \begin{align*}
    w_{i,-1}\mu_i\left(\begin{NiceMatrix}
        x_{i,j+1}&x_{i,j+2}
    \end{NiceMatrix}\right)
    =\left( \begin{NiceMatrix}x_{i,j+1}\mu_{i}w_{i,-1}&w_{i,j+1}-y_{i,j+1}\end{NiceMatrix}\right)
\end{align*}
and therefore $w_{i,j+1}=y_{i,j+1}+x_{i,j+2}\mu_iw_{i,-1}$, proving the claim.\\
In particular, this tells us 
\begin{equation}\label{w in terms of y}
    w_{i,n_{i}}=y_{i,n_i}-\mu_i\mu_{i+1}w_{i,-1}.
\end{equation}
 
Now we can use this to compute the remaining entries of the quiddity cycle. By Lemma \ref{quiddity from rows}, these are given by \begin{align*}
\frac{w_{i+1,-1}+w_{i,n_i}}{-\mu_{i+1}} \overset{(\ref{w in terms of y})}{=}    \frac{w_{i+1,-1}+y_{i,n_i}-\mu_i\mu_{i+1}w_{i,-1}}{-\mu_{i+1}}=-\mu_{i+1}y_{i,n_{i}}-\mu_{i+1}w_{i+1,-1}+\mu_{i}w_{i,-1}.
\end{align*}

At the boundaries of the frieze pattern we always have $w_{0,-1}=0$ and $w_{r+1,-1}=0$.
We see that $c_{i,m_i}=-\mu_{i+1}y_{i,n_i}$ for $i=0,\dots,r$ by applying Lemma \ref{quiddity from rows} to the third tame frieze pattern.
This means if we define $a_0:=-\mu_1w_{1,-1}$, $a_r:=\mu_{r}w_{r,-1}$, $a_i:=\mu_iw_{i,-1}-\mu_{i+1}w_{i+1,-1}$ for $1\leq j \leq r-1$, we have $\sum^r_{i=0}a_i=0$ and the missing quiddity cycle entries are given by $c_{i,m_i}+a_i$. In the same way we find $b_i$ with $\sum^r_{i=0}b_i=0$ such that the missing quiddity cycle entries of $\tilde{\mathcal{T}}$ are $c_{i,m_i}+b_i$. Finally, we get \begin{align*}
    \prod_{i=0}^r\varepsilon_i=\mu_0\left(\prod_{i=1}^{r}\mu_{i}^{2}\right)\mu_{r+1}=-1.
\end{align*}
\\
\\
Now assume that there are $r \in \mathbb{Z}_{\geq 0}$, $\varepsilon_i \in \{\pm 1\}, a_i,b_i$ with 
$\sum^r_{i=0}a_i=\sum^r_{i=0}b_i=0$ and $\varepsilon_i$ quiddity cycles $(c_{i,1},\dots,c_{i,m_i})$ for $i=1,\dots,r$ 
such that 
\begin{align*}
    (c_{0,1},\dots,c_{0,m_0-1},c_{1,m_0}+a_0,\dots\dots,c_{r,1},\dots,c_{r,m_r-1},c_{r,m_r}+a_{r})
\end{align*}
is the quiddity cycle of $\mathcal{T}$
and 
\begin{align*}
    (c_{0,1},\dots,c_{0,m_0-1},c_{0,m_0}+b_0,\dots\dots,c_{r,1},\dots,c_{r,m_r-1},c_{r,m_r}+b_{r})
\end{align*}
is the quiddity cycle of $\tilde{\mathcal{T}}$. We want to show that the frieze patterns have the same first row.
We denote the entries of $\mathcal{T}$ by $x_{i,j}$ and the entries of $\tilde{\mathcal{T}}$ by $\tilde{x}_{i,j}$.

Let $s_k:=m_0+\dots+m_k$. Then we have $c_{0,j}:=x_{j,j+2}$ for $j=1,\dots,m_0-1$ and $c_{i,j}=x_{s_{i-1}+j,s_{i-1}+j+2}$ for $i=1,\dots,r$, $1\leq j \leq m_i-1$. Analogously, we have $c_{0,j}:=\tilde{x}_{j,j+2}$ for $j=1,\dots,m_0-1$ and $c_{i,j}=\tilde{x}_{s_{i-1}+j,s_{i-1}+j+2}$ for $i=1,\dots,r$, $1\leq j \leq m_i-1$.\\
\\
We first show that $x_{1,j}=\tilde{x}_{1,j}=-\varepsilon_0\cdots\varepsilon_k$ if $j=s_k$, $x_{1,j}=\tilde{x}_{1,j}=0$ if $j=s_k+1$ and $x_{1,j}=\tilde{x}_{1,j}=\varepsilon_0\cdots\varepsilon_k$ if $j=s_k+2$.
We proceed by induction on $k$ for this. 
If we rephrase the recursion \eqref{recursion} in terms of $\eta$-matrices, as done in the proof of \cite[Proposition 2.4]{CH19}, we obtain

\begin{align*}
    \left(\begin{NiceMatrix}
        x_{1,j+1}&-x_{1,j}
    \end{NiceMatrix}\right)\cdot \prod_{k=j}^{l}\eta(x_{k,k+2})=  \left(\begin{NiceMatrix}
        x_{1,l+2}&-x_{1,l+1}
    \end{NiceMatrix}\right).
\end{align*}
\\

Since $\prod_{j=1}^{m_i}\eta(c_{i,j})= \left(\begin{matrix}
    \varepsilon_i&0\\
    0&\varepsilon_i
\end{matrix}\right)$, we have \begin{align}\label{incomplete product}
    \prod_{j=1}^{m_i-1}\eta(c_{i,j})=\varepsilon_i\eta(c_{i,m_i})^{-1}=\left(
    \begin{NiceMatrix}
        0&\varepsilon_i\\
        -\varepsilon_i&\varepsilon_ic_{i,m_i}
    \end{NiceMatrix}
    \right)
\end{align}
for $i=0,\dots,r.$
\\
\\
This tells us
\begin{align*}
    \left(\begin{NiceMatrix}
        x_{1,m_1+1}&-x_{1,m_1}
    \end{NiceMatrix}\right)=\left(\begin{NiceMatrix}
        1&0
    \end{NiceMatrix}\right)\prod_{j=1}^{m_0-1}\eta(x_{j,j+2})=
    \left(\begin{NiceMatrix}
        1&0
    \end{NiceMatrix}\right)\prod_{j=1}^{m_0-1}\eta(c_{0,j})\overset{\eqref{incomplete product}}{=}
    \left(\begin{NiceMatrix}
        0&\varepsilon_0
    \end{NiceMatrix}\right).
\end{align*}
\\
\\
Moreover,
\begin{align*}
    \left(\begin{NiceMatrix}
        x_{1,m_0+2}&-x_{1,m_0+1}
    \end{NiceMatrix}\right)=
    \left(\begin{NiceMatrix}
        x_{1,m_0+1}&-x_{1,m_0}
    \end{NiceMatrix}\right) \eta(x_{m_0,m_0+2}+a_0)
    =
    \left(\begin{NiceMatrix}
        0&\varepsilon_0
    \end{NiceMatrix}\right)\eta(x_{m_0,m_0+2}+a_0)=
    \left( \begin{NiceMatrix}
        \varepsilon_0&0    
    \end{NiceMatrix} \right).
\end{align*}
\\
\\
Therefore, $x_{1,m_0}=-\varepsilon_0$, $x_{1,m_0+1}=0$ and $x_{1,m_0+2}=\varepsilon_0$.
\\ \\ \\
Assuming $ \left( \begin{NiceMatrix}
        x_{1,s_k}&x_{1,s_k+1}&x_{1,s_k+2}    
    \end{NiceMatrix} \right)=\left( \begin{NiceMatrix}
        -\varepsilon_0 \cdots \varepsilon_k&0&\varepsilon_0\cdots \varepsilon_k   
    \end{NiceMatrix} \right)$,
we get
\begin{align*}
    \left(\begin{NiceMatrix}
        x_{1,s_{k+1}+1}&-x_{1,s_{k+1}}
    \end{NiceMatrix}\right)=\left(\begin{NiceMatrix}
        \varepsilon_0\cdots\varepsilon_k&0
    \end{NiceMatrix}\right)\prod_{j=s_k+1}^{s_{k+1}-1}\eta(x_{j,j+2})
    =\left(\begin{NiceMatrix}
        \varepsilon_0\cdots\varepsilon_k&0
    \end{NiceMatrix}\right)\prod_{j=1}^{m_{k+1}-1}\eta(c_{k+1,j})
    \overset{\eqref{incomplete product}}{=}\left(\begin{NiceMatrix}
        0&\varepsilon_0\cdots \varepsilon_{k+1}
    \end{NiceMatrix}\right)
\end{align*}
and 
\begin{align*}
    \left(\begin{NiceMatrix}
        x_{1,s_{k+1}+2}&-x_{1,s_{k+1}+1}
    \end{NiceMatrix}\right)=\left(\begin{NiceMatrix}
        0&\varepsilon_0\cdots\varepsilon_{k+1}
    \end{NiceMatrix}\right)\eta(x_{s_{k+1},s_{k+1}+2}+a_{k+1})=\left(\begin{NiceMatrix}
        \varepsilon_0\cdots \varepsilon_{k+1}&0
    \end{NiceMatrix}\right).
\end{align*}
\\
\\
 We can apply the same argument to the entries $\tilde{x}_{1,s_k}$,$\tilde{x}_{1,s_k+1}$ and $\tilde{x}_{1,s_k+2}$ of $\tilde{\mathcal{T}}$.
\\
\\
Now we are ready to show that the remaining entries of the first rows also coincide.
We again use induction for this. The first two entries are $0$ and $1$ for both frieze patterns. Now assume $x_{1,j}=\tilde{x}_{1,j}$ for all $j<l$. If $l\notin \{s_0+2,s_1+2,\dots,s_r+2\}$, the quiddity cycle entries $x_{l-2,l}$ and $\tilde{x}_{l-2,l}$ are the same.
The recursions $x_{1,l}=x_{l-2,l}x_{1,l-1}-x_{1,l-2}$ and $\tilde{x}_{1,l}=\tilde{x}_{l-2,l}\tilde{x}_{1,l-1}-\tilde{x}_{1,l-2}$ now show $x_{1,l}=\tilde{x}_{1,l}$. 
Otherwise, if $l=s_k+2$ for some $k$, we already know that $x_{1,l}=\varepsilon_0\cdots\varepsilon_k$.
\end{proof}
\end{Proposition}

\begin{Remark}\label{cluster interpretation}
    Tame frieze patterns over $\mathbb{Z}$ are sometimes thought of as ring homomorphisms from a type $A_n$ cluster algebra to $\mathbb{Z}$.
    Each row of a frieze pattern then contains the values of the cluster variables contained in a certain cluster.
    Thus, by Proposition \ref{wild frieze patterns are sequences of tame frieze patterns} we can view a (possibly wild) frieze pattern as a bi-infinite sequence $(\varphi_k)_{k\in \mathbb{Z}}$ of such ring homomorphisms such that $\varphi_k$ and $\varphi_{k+1}$ restrict to the same map on the cluster corresponding to the $k+1$st row.  
    However, not every cluster corresponds to a row of the frieze pattern. More generally there is a cluster for each ``lightning bolt" of entries. (Even this does not give us all the clusters.)
    Hence, it would be interesting to have a version of Proposition \ref{common row Lemma} for pairs of tame frieze patterns that agree on a ``lightning bolt". (See for example \cite{pressland2020frieze} for an overview of the connections between frieze patterns and cluster algebras.)

\end{Remark}

\begin{Corollary} \label{common row same sum}
    If two tame frieze patterns with entries in $\mathbb{Z}$ have a common row, the sum of their quiddity cycle entries will be the same.
    \begin{proof}
        We can rotate the frieze patterns and the quiddity cycles so that we can assume they have the same first row. Then Proposition \ref{common row Lemma} tells us that the quiddity cycles are both of the form \\$(c_{0,1},\dots,c_{0,m_0-1},c_{0,m_0}+a_1,\dots\dots,c_{r,1},\dots,c_{r,m_r-1},c_{r,m_r}+a_{r})$ for certain $a_i$ with $\sum_{i=1}^ra_i=0$. Therefore, the sum is \begin{align*}
            \sum_{i=0}^r\sum_{j=1}^{m_i}c_{i,j}.
        \end{align*}
        for both frieze patterns.
    \end{proof}
\end{Corollary}

\begin{Remark}
    This Corollary, and therefore also Proposition \ref{common row Lemma}, are not true anymore if we replace $\mathbb{Z}$ with an arbitrary commutative ring.
    For example, the tame frieze patterns over $\mathbb{Q}$
    \begin{align*}
     \begin{NiceMatrixBlock}[auto-columns-width]
        \begin{NiceMatrix}
            \ddots&&&&&&\ddots\\
            &0&1&\frac{1}{2}&0&-\frac{1}{2}&1&0\\
            &&0&1&2&0&-2&1&0\\
            &&&0&1&\frac{1}{2}&-2&\frac{1}{2}&1&0\\
            &&&&0&1&-2&0&2&1&0\\
            &&&&&0&1&-\frac{1}{2}&0&\frac{1}{2}&1&0\\
            &&&&&&0&1&-2&-2&-2&1&0\\
            &&&&&&&\ddots&&&&&&\ddots
        \end{NiceMatrix}
        \end{NiceMatrixBlock}
    \end{align*}
    and 
    \begin{align*}
        \begin{NiceMatrixBlock}[auto-columns-width]
        \begin{NiceMatrix}
            \ddots&&&&&&\ddots\\
            &0&1&\frac{1}{2}&0&-\frac{1}{2}&1&0\\
            &&0&1&2&-1&0&1&0\\
            &&&0&1&0&-1&\frac{1}{2}&1&0\\
            &&&&0&1&-2&0&2&1&0\\
            &&&&&0&1&-\frac{1}{2}&-1&0&1&0\\
            &&&&&&0&1&0&-1&-2&1&0\\
            &&&&&&&\ddots&&&&&&\ddots
        \end{NiceMatrix}
        \end{NiceMatrixBlock}
    \end{align*}
    have the same first row, but the quiddity cycle entries of the first one sum up to $-\frac{3}{2}$, while the entries of the second one sum up to $0$.\\
    However, the Lemma should still hold for a commutative ring $R$ where $1$ and $-1$ are the only units. This ensures that the entries next to a $0$ have to be $\pm1$. This is crucial for the proof of Proposition \ref{common row Lemma}.
\end{Remark}

\begin{Definition}
    
  Let $\mathcal{F}$ be a frieze pattern over $\mathbb{Z}$. Let $\mathcal{T}_k$ the tame frieze pattern obtained from row $k$ and $k+1$ of $\mathcal{F}$. Let $(c_1,\dots,c_m)$ be the quiddity cycle of $\mathcal{T}_k$. We define  $Q(\mathcal{F}):=\sum_{i=1}^{m}c_i$.\\
  $Q_k(\mathcal{F})$ does not depend on $k$: Let $\mathcal{T}_{k+1}$ be the tame frieze pattern obtained from rows $k+1$ and $k+2$ of $\mathcal{F}$. Then $\mathcal{T}_k$ and $\mathcal{T}_{k+1}$ have the same $k+1$st row. Therefore, $Q_k(\mathcal{F})=Q_{k+1}(\mathcal{F})$ by Corollary \ref{common row same sum} and we continue by induction. Hence, we can simply write $Q(\mathcal{F}):=Q_1(\mathcal{F})$. We call $Q(\mathcal{F})$ the quiddity number of $\mathcal{F}$.
\end{Definition}

\begin{Example}
    Let $\mathcal{F}$ be the frieze pattern obtained by repeating the segment \eqref{Wild Example}.
    We want to calculate $Q(\mathcal{F})$. The first two rows uniquely define the tame frieze pattern

    \begin{align}
     \begin{NiceMatrixBlock}[auto-columns-width]
        \begin{NiceMatrix}
            \ddots&&&&&\ddots\\
            0&1&3&-1&-3&-2&1&0\\
            &0&1&0&-1&-1&0&1&0\\
            &&0&1&0&-1&-1&3&1&0\\
            &&&0&1&1&0&-1&0&1&0\\
            &&&&0&1&1&-3&-1&0&1&0\\
            &&&&&0&1&-2&-1&-1&1&1&0\\
            &&&&&&0&1&0&-1&0&1&1&0\\
            &&&&&&&& \ddots&&&&&\ddots\\
        \end{NiceMatrix}
    \end{NiceMatrixBlock}
    \end{align}
    Summing up the entries of the quiddity cycle $(3,0,0,1,1,-2,0)$ shows $Q(\mathcal{F})=3$.
\end{Example}

It was a rather arbitrary choice that we considered pairs of consecutive rows and not pairs of consecutive columns. Using pairs of columns, a wild frieze pattern gives rise to a second sequence of tame frieze patterns. This is in general not the same sequence. However, if we compute the sum of the quiddity cycle entries of those tame frieze patterns, we will still get $Q(\mathcal{F})$ and not a second invariant.

\begin{Lemma}
    Let $\mathcal{F}$ be a frieze pattern over $\mathbb{Z}$. Let $\tilde{Q}(\mathcal{F})$ be the number obtained by replacing rows by columns everywhere in the definition of $Q(\mathcal{F})$.
    Then $Q(\mathcal{F})=\tilde{Q}(\mathcal{F})$.
    \begin{proof}
        Consider the tame frieze pattern $\mathcal{T}^{\mathcal{F}}_0$ as in the proof of Lemma \ref{wild frieze patterns are sequences of tame frieze patterns}. Let $\mathcal{F}_0$ be the frieze pattern defined by \begin{align*}
            \rho_i(\mathcal{F}_0):=\begin{cases}
            \rho_i(\mathcal{F})& \text{ if } i\leq 0 \\
            \rho_i(\mathcal{T}^{\mathcal{F}}_0)& \text{ if } i\geq 1
        \end{cases}.
        \end{align*}\\
        We have $Q(\mathcal{F})=Q(\mathcal{F}_0)$ and $\tilde{Q}(\mathcal{F})=\tilde{Q}(\mathcal{F}_0)$ because $\mathcal{F}$ and $\mathcal{F}_0$ have common rows and common columns. Similarly, we get $Q(\mathcal{F}_0)=Q(\mathcal{T}^{\mathcal{F}}_0)$ and $\tilde{Q}(\mathcal{F}_0)=\tilde{Q}(\mathcal{T}^{\mathcal{F}}_0)$. But since $\mathcal{T}^{\mathcal{F}}_0$ is tame, $Q(\mathcal{T}^{\mathcal{F}}_0)$ and $\tilde{Q}(\mathcal{T}^{\mathcal{F}}_0)$ are both the sum of the entries of the quiddity cycle of $\mathcal{T}^{\mathcal{F}}_0$.

    \end{proof}
\end{Lemma}

Since any two tame frieze patterns that share a row have the same quiddity number, we can also define the quiddity number for rows.

\begin{Definition}
    Let $v:=(0,1,x_1,\dots,x_n,1,0)$ be a tuple that appears as a row in some tame frieze pattern $\mathcal{T}$ over $\mathbb{Z}$. Then we define $Q(v):=Q(\mathcal{T})$.
\end{Definition}

\begin{Definition} \label{definition Graph}
    Let $n \in \mathbb{Z}_{\geq 0}$. Consider the (infinite) directed graph $\Gamma_{2,n}(\mathbb{Z})$ where the vertices are all tuples $(0,1,x_1,\dots,x_n,1,0) \in \mathbb{Z}^{n+4}$ that appear as rows in some frieze pattern over $\mathbb{Z}$ and where an arrow $(0,1,x_1,\dots,x_n,1,0)\rightarrow(0,1,y_1,\dots,y_n,1,0)$ exists if 
    \begin{equation*}
           \begin{NiceMatrixBlock}[auto-columns-width]
        \begin{NiceMatrix}
        0&1&x_1&\dots&x_n&1&0\\
        &0&1&x_1&\dots&y_n&1&0
        \end{NiceMatrix}
    \end{NiceMatrixBlock}
    \end{equation*}
    satisfies the $SL_2$ rule everywhere.
    
    \end{Definition}
    
  This graph was first defined in \cite{Cun17} for $SL_k$ frieze patterns in a slightly different way. (The $2$ in $\Gamma_{2,n}(\mathbb{Z})$ indicates that we are looking at $SL_2$ frieze patterns.)
    Since all ($SL_2$) frieze patterns over $\mathbb{Z}_{\geq 1}$ are automatically tame, we allow entries in $\mathbb{Z}$. Moreover, the graph defined in \cite{Cun17} contains tuples which are not rows in any frieze pattern. This means we would get isolated vertices in the graph which are not useful for understanding wild frieze patterns.

    \begin{Remark}
    If a tuple appears as a row of a frieze pattern over $\mathbb{Z}$, then by Lemma \ref{wild frieze patterns are sequences of tame frieze patterns} it already appears as a row in a tame frieze pattern over $\mathbb{Z}$.\\
    A tuple $(0,1,x_1,\dots,x_n,1,0)$ appears as a row of a (tame) frieze pattern over $\mathbb{Z}$ if and only if for $i=1,\dots,n$ either $x_i=0,x_{i-1}=\pm1$ and $x_{i+1}=\mp1$ or $x_i \mid x_{i-1}+x_{i+1}$. 
    \\Indeed, if $0\neq x_i \nmid x_{i-1}+x_{i+1}$, then by Lemma \ref{quiddity from rows} there are non integral entries in the frieze pattern and if $x_i=0$, then the neighbouring entries need to be $1$ and $-1$.
    Conversely, if for $i=1,\dots,n$ either $x_i=0,x_{i-1}=\pm1$ and $x_{i+1}=\mp1$ or $x_i \mid x_{i-1}+x_{i+1}$, we can construct a second row $(1,y_1,\dots,y_n,1)$ recursively such that either $y_{i+1}:=\frac{x_{i+2}+x_i}{x_{i+1}}y_{i}-y_{i-1}$ if $x_{i+1} \neq 0$, or $y_{i+1}=0$ otherwise. Then clearly the entries of the second row are integers. 
    Moreover, if we define \begin{equation*}
        c_i:= \begin{cases}
        \frac{x_{i+2}+x_i}{x_{i+1}}, \text{ if }x_{i+1}\neq 0\\
        \frac{y_{i-1}}{y_{i}}, \text{ otherwise}
    \end{cases}
    \end{equation*} then the rows satisfy the recursions $x_{i+2}=c_ix_{i+1}-x_i$ and $y_{i+1}=c_iy_{i}-y_{i-1}$.\\
    (We have $y_i=\pm1 \neq 0$ in the definition of $c_i$ because $x_{i+1}=0$ and $x_i=\pm1$.)
    
    This way, we can compute
    \begin{align*}
        \begin{vmatrix}
            x_{i+1}&x_{i+2}\\
            y_{i}&y_{i+1}
        \end{vmatrix}=\begin{vmatrix}
            x_{i+1}&c_ix_{i+1}-x_{i}\\
            y_{i}&c_iy_{i}-y_{i-1}
        \end{vmatrix}
        =
        \begin{vmatrix}
            x_{i+1}&-x_i\\
            y_{i}&-y_{i-1}
        \end{vmatrix}
        =
        \begin{vmatrix}
            x_i&x_{i+1}\\
            y_{i-1}&y_{i}
        \end{vmatrix}.
    \end{align*}
    This can be used to show inductively that the two rows satisfy the $SL_2$ rule everywhere. Now it follows from Lemma \ref{two rows determine tame frieze} that there is a tame frieze pattern containing these rows.
    These observations show that the vertices of $\Gamma_{2,n}(\mathbb{Z})$ are the tuples $(0,1,x_1,\dots,x_n,1,0)\in \mathbb{Z}^{n+4}$ with either $x_i=0,x_{i-1}=\pm1$ and $x_{i+1}=\mp1$ or $x_i \mid x_{i-1}+x_{i+1}$ for all $i=1,\dots,n$.
    \end{Remark}
    
    Wild frieze patterns over $\mathbb{Z}$ correspond to infinite paths in $\Gamma_{2,n}(\mathbb{Z})$. Since arrows in $\Gamma_{2,n}(\mathbb{Z})$ are just pairs of consecutive rows, Lemma \ref{two rows determine tame frieze} tells us that each arrow corresponds to a tame frieze pattern.
    \\
    \\

\label{section_wild}
\section{\texorpdfstring{Strongly connected components of $\boldsymbol{\Gamma_{2,n}(\mathbb{Z})}$}{Strongly connected components}}

In this section we want to classify the strongly connected components of $\Gamma_{2,n}(\mathbb{Z})$ using the quiddity number. In particular, this enables us to count the number of strongly connected components.
More precisely, we want to show:
\begin{Theorem}\label{connected component classification}
    Let $v$ and $w$ be rows of frieze patterns of height $n$. Then $v$ and $w$ are in the same strongly connected component of $\Gamma_{2,n}(\mathbb{Z})$ if and only if one of the two following conditions holds:
    \begin{enumerate}
        \item $Q(v)=Q(w)=\pm (3n+3)$, and $v$ and $w$ are both rows of the same (twisted) Conway-Coxeter frieze pattern.
        \item $Q(v)=Q(w)\neq \pm(3n+3)$.
    \end{enumerate}
    
\end{Theorem}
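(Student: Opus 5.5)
The plan is to prove the two directions separately, working throughout with tame frieze patterns via Lemma \ref{two rows determine tame frieze}. Each arrow $v\to w$ of $\Gamma_{2,n}(\mathbb{Z})$ is a pair of consecutive rows of a unique tame frieze pattern $\mathcal{T}$. Since $\mathcal{T}$ is periodic, its rows form a directed cycle in $\Gamma_{2,n}(\mathbb{Z})$, so $w$ also reaches $v$. Consequently strong and weak connectivity coincide, and $v$, $w$ lie in the same component exactly when they are joined by a chain of rows, consecutive members of which are rows of a common tame frieze pattern.

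\emph{Necessity.} By Corollary \ref{common row same sum}, $Q$ is constant along such a chain, so $Q(v)=Q(w)$. For the value $\pm(3n+3)$, I would show that a row $v$ with $Q(v)=3n+3$ lies in no tame frieze pattern other than a Conway--Coxeter one. Then every tame frieze pattern through $v$ is Conway--Coxeter. A Conway--Coxeter row has no zeros, so by the Remark after Lemma \ref{two rows determine tame frieze} it determines its frieze pattern. Hence the component of $v$ consists exactly of the rows of that single frieze pattern.

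To see that $Q=3n+3$ forces Conway--Coxeter, I would use Theorem \ref{quiddity cycle from labeling}. Write the quiddity cycle as the vertex sums of an admissible labeling. Then $Q=\sum_t 3a_t$, summing over the $n+1$ triangles. Labels $\neq\pm1$ cancel in polarized pairs, so $Q=3(\#\{a_t=1\}-\#\{a_t=-1\})\le 3(n+1)$. Equality forces all labels to be $1$, which by Remark \ref{map remark} means the frieze pattern is Conway--Coxeter. The case $-(3n+3)$ follows in the same way, with all labels $-1$. When $n$ is odd this gives the twisted Conway--Coxeter frieze patterns; I would check this against the parity condition on $d$, and it is also consistent with \cite{Fon14}.

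\emph{Sufficiency.} This is the main obstacle: showing that any two rows with equal $Q\neq\pm(3n+3)$ are connected. By rotating, every row of a tame frieze pattern can be brought to the first row of a rotated copy. So it suffices to show that any two tame frieze patterns $\mathcal{T},\mathcal{T}'$ with equal $Q$ are linked by a chain of tame frieze patterns, consecutive ones sharing a row. Rotations and reflections of a single pattern are already linked, since they share rows.

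I would work with admissible labelings and define elementary moves: flipping a diagonal, changing the labels of a polarized quadrilateral $(a,-a)\mapsto(b,-b)$, and swapping a $(1,-1)$ pair. The task is to realize each move as a path in $\Gamma_{2,n}(\mathbb{Z})$. For this I would exploit Proposition \ref{common row Lemma}, which says that frieze patterns with a zero in a common row can redistribute integers $a_i$ across the blocks between the zeros. A polarized quadrilateral produces such zeros in some row. So the move $(a,-a)\mapsto(b,-b)$, or a transfer across a zero, does not change the row containing the zero, and therefore yields an edge-sharing pair of frieze patterns.

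Using these moves, I would normalize every labeling to a standard form depending only on $Q$: a fixed fan triangulation, labels $1$ and $-1$ in a prescribed order, plus one polarized pair. Here the hypothesis $Q\neq\pm(3n+3)$ is essential. It guarantees that at least one label is not $1$ (respectively not $-1$), so there are zeros available to perform moves. This is precisely why the Conway--Coxeter case is rigid. I expect the delicate part to be showing that flips of the triangulation can be achieved through rows containing zeros; I would try to first create a $(0,0)$ polarized pair adjacent to the flipped diagonal, perform the flip there, and then remove it.
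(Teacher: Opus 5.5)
Your plan is essentially the paper's proof. Necessity is Lemma \ref{extreme quiddity number}, via $Q=3\sum_t a_t$ and the fact that a zero-free row has only one successor. Sufficiency is the chain of Lemmas \ref{only 1s}, \ref{reassigning}, \ref{flip of 0s}, \ref{get triangulation right} and \ref{get labels right}, including your $(0,0)$ trick for flips. Normalizing to a fixed standard form instead of passing directly from $L_v'$ to $L_w'$ is an inessential variation, as is moving the $-1$'s by swapping adjacent $(1,-1)$ pairs instead of using the ear induction.

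The gap is that the step carrying the sufficiency direction is only asserted, while the step you call delicate is trivial. Once both triangles at a diagonal are labeled $0$, flipping changes no vertex sum, so the frieze pattern does not change at all (Lemma \ref{flip of 0s}).

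What actually needs proof is that relabeling a polarized quadrilateral $PQRS$ with diagonal $PR$ from $(a,-a)$ to $(b,-b)$ yields a frieze pattern sharing a row with the old one. The quiddity cycle changes only at the apices $Q$ and $S$, by $\pm(b-a)$. So Proposition \ref{common row Lemma} needs the quiddity cycle of $f_n(L)$, cut at $Q$ and $S$, to split into two $\pm1$-quiddity cycles once the entries at $Q$ and $S$ are shifted by opposite amounts. This splitting is what produces the zero in the common row. Your sentence ``a polarized quadrilateral produces such zeros'' assumes exactly this without argument.

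The paper proves it as follows:
\begin{enumerate}
\item Detach the labeled polygons $L_1,\dots,L_4$ sitting on the sides of the quadrilateral and glue them in pairs into $L_{2,3}$ and $L_{1,4}$.
\item Check, via an admissible partition, that these are $\pm1$-admissible, so Remark \ref{extension to 1-labelings} applies.
\item Since $a$ and $-a$ cancel at $P$ and $R$, the quiddity cycle of $L$ is the concatenation of the cycles of $L_{2,3}$ and $L_{1,4}$ with last entries shifted by $\pm\alpha$.
\end{enumerate}
This argument uses that $L'$ is admissible, which fails for arbitrary polarized quadrilaterals (see the Remark following Lemma \ref{reassigning}). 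So your elementary move must be restricted to pairs from an admissible partition or to $(1,-1)$ pairs; with that restriction your normalization goes through.

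Separately, your claim that reflections of a pattern share rows is false, though you never use it. For $n=3$, the cycle $(3,1,2,3,1,2)$ gives the rows $(0,1,3,2,1,1,0)$, $(0,1,1,1,2,1,0)$ and $(0,1,2,5,3,1,0)$. The reversed cycle gives the reversed tuples, which lie in a different Conway--Coxeter component.
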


We first deal with the easier case where $Q(v)=Q(w)=\pm(3n+3)$.

\begin{Lemma}\label{extreme quiddity number} 
    Let $v$ and $w$ be two rows of tame frieze patterns of height $n$ with 
    \begin{align*}
        Q(v)=Q(w)=\pm(3n+3).
    \end{align*} Then $v$ and $w$ are in the same strongly connected component  of $\Gamma_{2,n}(\mathbb{Z})$ if and only if they are part of the same (twisted) Conway-Coxeter frieze pattern.
    \begin{proof}
        If they are both part of the same (twisted) Conway-Coxeter frieze pattern, this frieze pattern yields a path from $v$ to $w$ and one from $w$ to $v$ because of the periodicity. 
        \\
        \\
        Conversely, assume that $v$ and $w$ are in the same strongly connected component.
        Let $\mathcal{T}$ be a tame frieze pattern containing $v$.  If $Q(v)=Q(\mathcal{T})=3n+3$, any admissible labeling that is mapped to $\mathcal{T}$ by $f_n$ needs to consist entirely of triangles labeled by $1$. Therefore, $\mathcal{T}$ is a Conway-Coxeter frieze pattern by Remark \ref{map remark}. Similarly, if $Q(v)=Q(\mathcal{T})=-3n-3$, then $\mathcal{T}$ is a twisted Conway-Coxeter frieze pattern.\\
        A row of a (twisted) Conway-Coxeter frieze pattern has just one arrow to the next row in $\Gamma_{2,n}(\mathbb{Z})$. Therefore, the strongly connected component of $v$ contains precisely the rows of the (twisted) Conway-Coxeter frieze pattern. In particular, $w$ can only be in the same strongly connected component as $v$ if it appears in the same (twisted) Conway-Coxeter frieze pattern.
    \end{proof}
\end{Lemma}

Now we turn our attention towards the rows with quiddity number different from $\pm(3n+3)$. 
For two arbitrary rows $v$ and $w$ with the same quiddity number, we want to construct a path from $v$ to $w$ in $\Gamma_{2,n}(\mathbb{Z})$. Our strategy for this is as follows: 
Let $\mathcal{F}_v$ and $\mathcal{F}_w$ be tame frieze patterns containing the rows $v$ and $w$ respectively.
Let $f_n$ be the surjective map from admissible labelings of an $(n+3)$-gon to tame frieze patterns over $\mathbb{Z}$ of height $n$. We want to construct a finite sequence of admissible labelings $L_1,\dots,L_k$ such that $f_n(L_1)=\mathcal{F}_v$ and $f_n(L_k)=\mathcal{F}_w$, and for each $i=1,\dots,k-1$ the tame frieze patterns $f_n(L_i)$ and $f_n(L_{i+1})$ have a common row $u_i$.
Since $u_i$ and $u_{i+1}$ are both part of the tame frieze pattern $f(L_{i+1})$, we get paths from $u_i$ to $u_{i+1}$ and from $v$ to $u_1$ and from $u_{k-1}$ to $w$. We then obtain a path from $v$ to $w$ by concatenating those paths.
\\
\\
The next two Lemmata will give the two ways consecutive admissible labelings $L_i$ and $L_{i+1}$ will be related in the sequence we want to construct.

\begin{Lemma}\label{reassigning}
    Let $L$ a be an admissible labeling of an $(n+3)$-gon with underlying triangulation $T$, and let $f_n$ be the surjective map assigning tame frieze patterns over $\mathbb{Z}$ to admissible labelings.
  
  Let $t_1$ and $t_2$ be adjacent triangles in the underlying triangulation of $L$ such that $t_1$ is labeled  $a$ and $t_2$ is labeled $-a$. Let $L'$ be a second admissible labeling of the same underlying triangulation such that the label of $t_1$ is $b$, the label of $t_2$ is $-b$, and all other labels are the same as in $L$. Then $f_n(L)$ and $f_n(L')$ have a common row.

    \begin{proof}
            We can assume $b\neq a$ because otherwise $L=L'$ and there is nothing to show.\\
            
            The edges of the  quadrilateral are either edges or diagonals in the underlying triangulation $T$ of $L$.
            We can obtain $L$ from the labeling of the  quadrilateral $\{t_1,t_2\}$ by attaching labeled triangulations to those sides of the  quadrilateral that need to become diagonals in $T$. If a side of the  quadrilateral is also a side in $T$, then we would not need to attach a labeled triangulation to this side. However, by assuming that we attach a labeled $2$-gon to those sides, we can avoid a case distinction. The $1$-quiddity cycle of the labeled $2$-gon is $(0,0)$.

            With this trick in mind, $L$ can be obtained by gluing $4$ smaller labeled triangulations $L_1,L_2,L_3$ and $L_4$ to the sides of the  quadrilateral.
            We claim that the four labelings will be $\pm1$ admissible.
            Since $L$ is admissible, there is a partition $P$ of the triangles that are not labeled $\pm1$ in $L$ into polarized quadrilaterals. Similarly, since $L'$ is admissible, there is a partition $P'$ of the triangles not labeled $\pm1$ in $L'$ into polarized  quadrilaterals. We want to show that for at least one of these two partitions each of the polarized  quadrilaterals is contained in one of the $L_i$. \\
            Assume this is not the case for $P$. Then $t_1$ needs to form a polarized  quadrilateral with a triangle $t_3\neq t_2$ in $L$. This means the label of $t_3$ is $-a$ in both $L$ and $L'$. Since $a\neq b$, $\{t_1,t_3\}$ is not a polarized  quadrilateral in $L'$. But since $t_1$ can not have more than two adjacent triangles, $\{t_1,t_2\}$ needs to be part of $P'$. This shows that, apart from $\{t_1,t_2\}$, every  quadrilateral of $P'$ is contained in one $L_i$. \\
                 
            Therefore, all of the $L_i$ are $\pm1$ admissible because we can group the triangles not labeled $1$ or $-1$ as in one of the partitions $P$ and $P'$.
            Thus, by adding the labels of all triangles adjacent to an edge, we obtain either $1$-quiddity cycles or $-1$-quiddity cycles. We can assume that $L_1$ and $L_2$ are attached to the triangle $t_1$, and $L_3$ and $L_4$ are attached to $t_2$. By gluing $L_1$ and $L_4$ together along the edges which connect them to the  quadrilateral, we obtain a labeling $L_{1,4}$.
            Similarly, $L_2$ and $L_3$ can be glued together along the edges connecting them to $t_2$. We call the resulting labeling $L_{2,3}$. We now compare the $\pm1$-quiddity cycles of all the different labelings. 
            If $(c^{(1)}_1,\dots,c^{(1)}_{r_1})$ is the $\pm1$-quiddity cycle obtained from $L_1$ and $(c^{(4)}_1,\dots,c^{(4)}_{r_4})$ is the $\pm1$-quiddity cycle obtained from $L_4$, then $L_{1,4}$ will yield the quiddity cycle 
            \begin{align*}
                (c^{(4)}_2,\dots,c^{(4)}_{r_4-1},c^{(4)}_{r_4}+c^{(1)}_1,c^{(1)}_2,\dots,c^{(1)}_{r_1-1},c^{(1)}_{r_1}+c^{(4)}_1).
            \end{align*}
            \\
                
            Similarly, if $(c^{(2)}_1,\dots,c^{(2)}_{r_2})$ is the $\pm1$-quiddity cycle obtained from $L_2$ and $(c^{(3)}_1,\dots,c^{(3)}_{r_3})$ is the $\pm1$-quiddity cycle obtained from $L_3$, then $L_{2,3}$ will yield the quiddity cycle 
            \begin{align*}
                (c^{(2)}_2,\dots,c^{(2)}_{r_2-1},c^{(2)}_{r_2}+c^{(3)}_1,c^{(3)}_2,\dots,c^{(3)}_{r_3-1},c^{(3)}_{r_3}+c^{(2)}_{1}).
            \end{align*}
            (See \cite[Section 4]{C19})\\ \\
            Finally, the quiddity cycle obtained from $L$ will read  
            \begin{align*}
                (c^{(2)}_2,\dots,c^{(2)}_{r_2-1},c^{(2)}_{r_2}+c^{(3)}_{1},c^{(3)}_{2},\dots,c^{(3)}_{r_3-1},c^{(3)}_{r_3}+c^{(4)}_{1}-a,c^{(4)}_{2},\dots,c^{(4)}_{r_4-1},c^{4}_{r_4}+c^{(1)}_1,c^{(1)}_2,\dots,c^{(1)}_{r_1-1},c^{(1)}_{r_1}+c^{(2)}_1+a)
            \end{align*}
            up to suitable rotation. We write $\alpha:=c^{(4)}_1-a-c^{(2)}_1$.
            After this substitution, the quiddity cycle of $L$ can be rewritten as 
            \begin{align*}
                (c^{(2)}_2,\dots,c^{(2)}_{r_2-1},c^{(2)}_{r_2}+c^{(3)}_{1},c^{(3)}_{2},\dots,c^{(3)}_{r_3-1},c^{(3)}_{r_3}+c^{(2)}_{1}+\alpha,c^{(4)}_{2},\dots,c^{(4)}_{r_4-1},c^{(4)}_{r_4}+c^{(1)}_1,c^{(1)}_2,\dots,c^{(1)}_{r_1-1},c^{(1)}_{r_1}+c^{(4)}_1-\alpha).
            \end{align*}

            Similarly, the quiddity cycle of $L'$ will be 
            \begin{align*}
                (c^{(2)}_2,\dots,c^{(2)}_{r_2-1},c^{(2)}_{r_2}+c^{(3)}_{1},c^{(3)}_{2},\dots,c^{(3)}_{r_3-1},c^{(3)}_{r_3}+c^{(4)}_{1}-b,c^{(4)}_{2},\dots,c^{(4)}_{r_4-1},c^{(4)}_{r_4}+c^{(1)}_1,c^{(1)}_2,\dots,c^{(1)}_{r_1-1},c^{(1)}_{r_1}+c^{(2)}_1+b)
            \end{align*}
            up to a suitable rotation. 
            After substituting $\beta:=c^{(4)}_1-b-c^{(2)}_1$, the quiddity cycle can be rewritten as 
            \begin{align*}
                (c^{(2)}_2,\dots,c^{(2)}_{r_2-1},c^{(2)}_{r_2}+c^{(3)}_{1},c^{(3)}_{2},\dots,c^{(3)}_{r_3-1},c^{(3)}_{r_3}+c^{(2)}_{1}+\beta,c^{(4)}_{2},\dots,c^{(4)}_{r_4-1},c^{(4)}_{r_4}+c^{(1)}_1,c^{(1)}_2,\dots,c^{(1)}_{r_1-1},c^{(1)}_{r_1}+c^{(4)}_1-\beta).
            \end{align*}
            Therefore, Proposition \ref{common row Lemma} shows that $f_n(L)$ an $f_n(L')$ have a common row.

    \end{proof}
\end{Lemma}

\begin{Remark}
    We actually have to assume that $L'$ is again an admissible labeling in Lemma \ref{reassigning}. This is not automatically true, as shown by the following pathological example.

    \begin{figure}[H]
        \centering
        \includegraphics[width=0.8\linewidth]{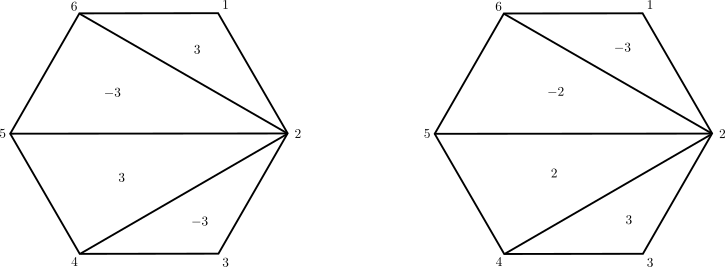}
        \caption{Reassigning a polarized quadrilateral might not preserve admissibility}
    \end{figure}
\end{Remark}
After relabeling the  quadrilateral in the middle, the labeling is not admissible anymore. This is because the chosen  quadrilateral is not part of the disjoint union of  quadrilaterals that makes the labeling admissible.

\begin{figure}[H]
    \centering
    \includegraphics[width=0.9\linewidth]{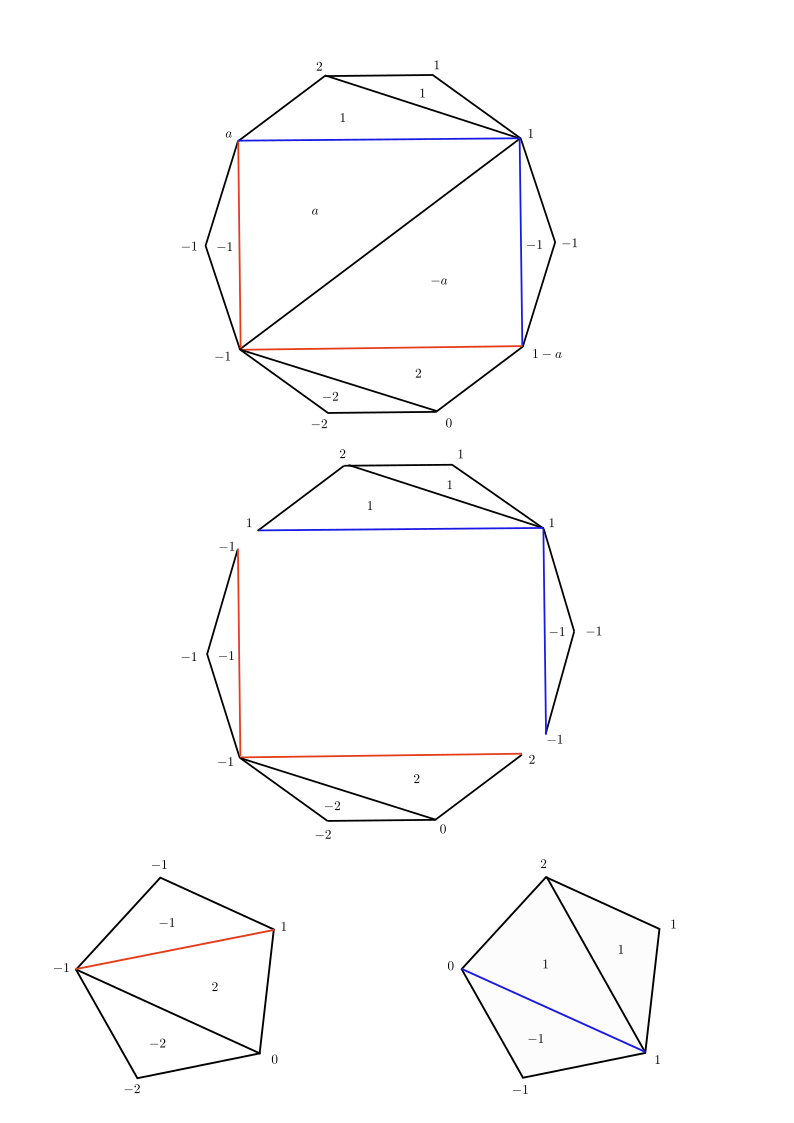}
    \caption{Illustration of the proof of Lemma \ref{reassigning}. Here we have $\alpha=a-1.$}
\end{figure}

\begin{Lemma} \label{flip of 0s}
      Let $L$ a be an admissible labeling of an $(n+3)$-gon, and let $f_n$ be the surjective map assigning tame frieze patterns over $\mathbb{Z}$ to admissible labelings.
      Let $t_1$ and $t_2$ be adjacent triangles both labeled by $0$. Then we get a new admissible labeling $L'$ by flipping the diagonal between $t_1$ and $t_2$ (This means in the  quadrilateral formed by $t_1$ and $t_2$ we replace the diagonal between $t_1$ and $t_2$ by the other diagonal of the  quadrilateral.) and labeling the two new triangles by $0$ again.\\ 
      Moreover, $f_n(L)=f_n(L')$.
    \begin{proof}
        Since no other polarized  quadrilaterals are effected, it is clear that $L'$ is again admissible.
        When computing a quiddity cycle from an admissible labeling, we sum up all the labels of triangles adjacent to a vertex.
        When passing from $L$ to $L'$, we get an additional summand $0$ in two of these sums and for two other sums a summand $0$ is removed. This does not change the resulting quiddity cycle.
        
    \end{proof}
\end{Lemma}

\begin{Lemma}\label{only 1s}
    Let $L$ be an admissible labeling of an $(n+3)$-gon. Then there is a sequence of admissible labelings
    \begin{align*}
        L=L_0,L_1,\dots,L_{r-1},L_r=L'
    \end{align*}
    such that for $i=0,\dots,r-1$ the tame frieze patterns $f_n(L_i)$ and $f_n(L_{i+1})$ have a common row, and all labels of $L'$ are either $1$ or $-1$. Moreover, the sum of the labels is the same for $L$ and $L'$.
    \begin{proof}
         Let $\{t_1,t_2\},\dots,\{t_{r-1},t_{r}\}$ be a partition of the triangles with a label different from $\pm1$ into polarized  quadrilaterals. We reassign the labels of each quadrilateral such that one triangle of each  quadrilateral is labeled $1$ the other is labeled $-1$ using Lemma \ref{reassigning}. Note that after each reassignment the labeling remains admissible because we only reassign polarized  quadrilaterals appearing in the partition. Since we have to reassign $r$ polarized  quadrilaterals, this gives us the admissible labelings $L_1$,\dots,$L_r$ with $f_n(L_i)=f_n(L_{i+1})$ for $i=0,\dots,r-1$.\\
         The sum of the labels will be the same for each $L_i$ because it is not changed by a reassignment.
    \end{proof}
\end{Lemma}

\begin{Lemma}\label{get triangulation right}
    Let $L$ and $L'$ be two admissible labelings of an $(n+3)$-gon such that all labels are either $1$ or $-1$ and both $L$ and $L'$ have at least one triangle labeled $1$ and one triangle labeled $-1$.
    Then there is a sequence 
    \begin{align*}
        L=L_0,L_1,\dots,L_{r-1},L_r=L''
    \end{align*}
     of admissible labelings such that for $i=0,\dots,r-1$ the frieze patterns $f(L_i)$ and $f(L_{i+1})$ have a common row, and the underlying triangulation of $L''$ is the same as that of $L'$. Furthermore, all labels of $L''$ are either $1$ or $-1$,  and the sum of all labels in $L$ and $L''$ will be the same.
    \begin{proof}
 
    It was shown in \cite{Law72} that any two triangulations of a convex polygon can be obtained from each other by applying a finite sequence of flips. We now want to show that we can always reassign labels using Lemma \ref{reassigning} such that a diagonal we want to flip is surrounded by two triangles labeled $0$.
    
    Assume that the first diagonal that needs to be flipped is adjacent to the triangles $t_0$ and $t_1$. 
    
    Since $1$s and $-1$s do not have to be part of a polarized  quadrilateral like other labels, we can reassign labels of  quadrilaterals without worrying about losing admissibility. By assumption, $L$ will have at least one label equal to $1$ and one label equal to $-1$. This means we can find a polarized  quadrilateral $\{t',t''\}$ with one triangle labeled $1$ and the other labeled $-1$ somewhere. Moreover, there is a sequence of overlapping  quadrilaterals
    \begin{align*}
        \{t',t''\}=\{t_s,t_{s-1}\},\{t_{s-1},t_{s-2}\},\dots,\{t_2,t_1\},\{t_{1},t_0\}
    \end{align*}
    Let $a_i$ be the label of $t_i$ in $L$ for $i=0,\dots,s$. Starting with $\{t_s,t_{s-1}\}$, we relabel the  quadrilaterals $\{t_i,t_{i-1}\}$ such that the new label of $t_{i-1}$ is $-a_{i-2}$. This way, the labels of the  quadrilateral $\{t_{i-1},t_{i-2}\}$ are $1$ and $-1$ and this  quadrilateral can be reassigned next. Finally, we are able to label the  quadrilateral $\{t_{1},t_0\}$ with two $0$s.\\
    This way, we obtain admissible labelings $L_{1},\dots,L_{s}$. By Lemma \ref{flip of 0s}, we can flip the diagonal between $t_{1}$ and $t_0$. This gives us another labeling $L_{s+1}$. \\
    Next, we reassign the two new triangles so that we have only labels $1$ and $-1$ again.
    This way we obtain one more labeling $L_{s+2}$.
    By Lemma \ref{reassigning} and Lemma \ref{flip of 0s}, the labelings $L_i$ and $L_{i+1}$ will always have a common row.\\
    
    If we proceed the same way for all other necessary flips, we end up with a sequence of admissible labelings as desired. \\
    Since neither reassigning polarized  quadrilaterals nor flipping an edge changes the sum of the labels, it will be the same for $L$ and $L''$.
    \end{proof}
\end{Lemma}

Now it remains to show that we can transform an admissible labeling consisting of $1$s and $-1$s into any other admissible labeling with the same underlying triangulation and the same sum of labels.
This will be done in the next Lemma.

\begin{Lemma}\label{get labels right}
    Let $\varepsilon \in\{\pm1\}$.
    Let $L$ and $L'$ be $\varepsilon$-admissible labelings of $(n+3)$-gons that have the same underlying triangulation and the same sum of labels. Furthermore, assume that each label of $L$ and each label of $L'$ is either $1$ or $-1$. Let $f_{n}$ be the map assigning tame frieze patterns to admissible labelings.
    Then there is a finite sequence of admissible labelings \begin{align*}
        L''=L_0,L_1,\dots,L_{r-1},L_r=L'
    \end{align*}
    such that $f_n(L_i)$ and $f_n(L_{i+1})$ have a common row for $i=0,\dots,r-1$.
    \begin{proof}
        We prove this by induction on the size of the  $\varepsilon$-admissible labeling.
        
        Let $q$ be the sum of all labels of $L$.
        If $n=0$, then $L=L'$ and there is nothing to do.  
        Assume $n \geq 1$ now.
        \\
        
        Let $t_0$ be an ear of the underlying triangulation of $L$ (A triangle bounded by two sides of the $(n+3)$-gon and one diagonal). Let $a_0$ be the label of $t_0$ in $L'$.
        Let $t_1$ be the unique triangle adjacent to $t_0$. As in the proof of Lemma \ref{get triangulation right}, we can find a sequence of triangles $t_k,\dots,t_1,t_0$ such that $t_i$ and $t_{i-1}$ are adjacent for every $i=1,\dots,n$, and one of $t_k$ and $t_{k-1}$ is labeled $1$ and the other is labeled $-1$. We can now reassign the labels of the  quadrilaterals $\{t_{i+1},t_{i}\}$ starting with $\{t_k,t_{k-1}\}$ such that the new label of $t_{i}$ is the negative of the old label of $t_{i-1}$. Finally, we reassign the labels of $\{t_1,t_0\}$ such that the label of $t_0$ becomes $a_0$.
        These $k$ reassignments yield $\varepsilon$-admissible labelings $L=L_0,\dots,L_k$ such that $f_n(L_i)$ and $f_n(L_{i+1})$ have a common row for $i=0,\dots,k-1$.\\ 
        We now remove the ear\footnote{ouch} $t_0$ from $T$ and get a new triangulation $T'$. We write $L'\vert_{T'}$ and $L_k\vert_{T'}$ for the restrictions of $L'$ and $L_k$ to $T'$.
        They are $\varepsilon a_0$-admissible labelings. In both labelings the sum of all labels is $q-a_0$. By induction there is a sequence 
        \begin{align*}
             L_k\vert_{T'} = \tilde{L}_k,\tilde{L}_{k+1}\dots,\tilde{L}_{r-1}, \tilde{L}_r=   L'\vert_{T'} 
        \end{align*}
        of admissible labelings such that $f_{n-1}(\tilde{L}_i)$ and $f_{n-1}(\tilde{L}_{i+1})$ have a common row for $i=0,\dots,r-1$.
        We can now extend each of those labelings to a labeling of $T$ by labeling $t_0$ with $a_0$. These will be $\varepsilon$-admissible labelings, and we obtain a sequence
         \begin{align*}
             L_k,L_{k+1}\dots,L_{r-1},L_r=L'
        \end{align*}
        where $f_n(L_i)$ and $f_n(L_{i+1})$ have a common row for $i=k,\dots,r-1$.
        Putting everything together, we obtain a sequence 
        \begin{align*}
            L=L_0,L_1,\dots,L_{r-1},L_r=L'
        \end{align*}
        where $f_n(L_i)$ and $f_n(L_{i+1})$ have a common row for $i=0,\dots,r-1$.\\

    \end{proof}
\end{Lemma}
We are now ready for the proof of our main result.

\begin{proof}[Proof of Theorem \ref{connected component classification}]
    First, let $v$ and $w$ be in the same connected component of $\Gamma_{2,n}(\mathbb{Z})$. Then by repeated application of Proposition \ref{common row Lemma}, we see that $Q(v)=Q(w)$. If $Q(v)=Q(w)=\pm(3n+3)$, we know that $v$ and $w$ are part of the same (twisted) Conway-Coxeter frieze pattern by Lemma \ref{extreme quiddity number}. \\
    \\
    Now assume $Q(v)=Q(w)$. If $v$ and $w$ are part of the same (twisted) Conway-Coxeter frieze pattern, then the rows of this frieze pattern give us a paths between $v$ and $w$ in both directions.
    \\
    Let $Q(v)=Q(w)\neq \pm(3n+3)$ now. Let $\mathcal{T}_v$ be a tame frieze pattern containing $v$, and let $\mathcal{T}_w$ a tame frieze pattern containing $w$. Let $L_v$ be an admissible labeling such that $f_n(L_v)=\mathcal{T}_v$, and let $L_w$ an admissible labeling such that $f_n(L_w)=\mathcal{T}_w$.
    By Lemma \ref{only 1s}, we find an admissible labeling $L_v'$ that only has labels $1$ and $-1$ and labelings
    \begin{align*}
        L_v=L_{v,0},L_{v,1},\dots,L_{v,r_v-1},L_{v,r_v}=L_v'
    \end{align*}
    with $f_n(L_{v,i})$ and $f_n(L_{v,i+1})$ having a common row for $i=0,\dots,r_v-1$.

    Also by applying Lemma \ref{only 1s} and reversing the resulting sequence, we get an admissible labeling $L_w'$
    that only has labels $1$ and $-1$ and a sequence of admissible labelings 
    \begin{align*}
        L_w'=L_{w,0},L_{w,1},\dots,L_{w,r_w-1},L_{w,r_w}=L_w
    \end{align*}
    with $f_n(L_{w,i})$ and $f_n(L_{w,i+1})$ having a common row for $i=0,\dots,r_w-1$.

    By first using Lemma \ref{get triangulation right} to get a labeling with the right triangulation and then Lemma \ref{get labels right} to get the right labels, we obtain a sequence of admissible labelings 
    \begin{align*}
        L_v'=L'_{0},L'_{1},\dots,L'_{r'-1},L'_{r'}=L_w'
    \end{align*}
    such that $f_n(L'_i)$ and $f_n(L'_{i+1})$ have a common row $u_i$ for $i=0,\dots,r'-1$.\\
    Note that the assumption $Q(v)=Q(w)$ implies that the labels of $L_v$ and $L_w$ sum up to the same value. This is also the case for all triangulations obtained from Lemma \ref{only 1s} and Lemma \ref{get triangulation right}. Therefore, it is actually possible to use Lemma \ref{get labels right} afterwards.   

    We can now combine these three sequences to obtain one longer sequence
    \begin{align}\label{long sequence}
        L_v=L_0,L_1,\dots,L_{r-1},L_r=L_w
    \end{align}
    of admissible labelings such that $f_n(L_i)$ and $f_n(L_{i+1})$ have a common row $u_i$ for $i=0,\dots,r-1$.
    We obtain paths from $u_{i}$ to $u_{i+1}$ for $i=0,\dots,r-1$ because both $u_{i}$ and $u_{i+1}$ are rows of $f_n(L_{i+1})$. Additionally $\mathcal{T}_v$ gives us a path from $v$ to $u_0$ and $\mathcal{T}_w$ gives us a path from $u_{r-1}$ to $w$. We now obtain a path from $v$ to $w$ by concatenating all those paths. \\
    By switching the roles of $v$ and $w$ in the argument above, we also obtain a path from $w$ to $v$.
\end{proof} 

\begin{Remark}
    The weakly connected components of $\Gamma_{2,n}(\mathbb{Z})$ are the same as the strongly connected components because there are no arrows between different strongly connected components. 
    Since an arrow between two vertices $v$ and $w$ already implies $Q(v)=Q(w)$, there could only be arrows between the vertices of two strongly connected components corresponding to (twisted) Conway-Coxeter frieze patterns. But this can not happen because each (twisted) Conway-Coxeter frieze pattern is uniquely determined by one of its rows.\\
    Alternatively, one could use the periodicity of tame frieze patterns to see this without using Theorem \ref{connected component classification}.
\end{Remark}

\begin{Example}
    Since frieze patterns over $\mathbb{Z}$ correspond to infinite paths in $\Gamma_{2,n}(\mathbb{Z})$, we can use Theorem \ref{connected component classification} to decide whether two rows of integers can be part of the same frieze pattern over $\mathbb{Z}$. For example, consider the tuples $(0,1,-4,-1,-1,4,1,0)$ and $(0,1,0,-1,2,3,1,0)$. The tuple $(0,1,-4,-1,-1,4,1,0)$ appears as a row in the tame frieze pattern 
    \begin{align*}
    \begin{NiceMatrixBlock}[auto-columns-width]
    \begin{NiceMatrix}
    \ddots&&&&&\ddots\\
        \color{red} \color{red}0& \color{red}1& \color{red}-4& \color{red}-1& \color{red}-1& \color{red}4& \color{red}1& \color{red}0\\
        &0&1&0&-1&3&1&1&0\\
        &&0&1&5&-16&-5&-4&1&0\\
        &&&0&1&-3&-1&-1&0&1&0\\
        &&&&0&1&0&-1&-1&5&1&0\\
        &&&&&0&1&4&3&-16&-3&1&0\\
        &&&&&&0&1&1&-5&-1&0&1&0\\
        &&&&&&&&\ddots&&&&&\ddots\\    
    \end{NiceMatrix}
    \end{NiceMatrixBlock}.
    \end{align*}
     This shows that $Q((0,1,-4,-1,-1,4,1,0))=3.$\\
    
The tuple $(0,1,0,-1,2,3,1,0)$ appears for example as a row in the tame frieze pattern
     \begin{align*}
    \begin{NiceMatrixBlock}[auto-columns-width]
    \begin{NiceMatrix}
    \ddots&&&&&\ddots\\
        \color{red} 0&\color{red} 1&\color{red} 0&\color{red} -1&\color{red} 2&\color{red} 3&\color{red} 1&\color{red} 0\\
        &0&1&1&-3&-4&-1&1&0\\
        &&0&1&-2&-3&-1&0&1&0\\
        &&&0&1&1&0&-1&1&1&0\\
        &&&&0&1&1&2&-3&-2&1&0\\
        &&&&&0&1&3&-4&-3&1&1&0\\
        &&&&&&0&1&-1&-1&0&1&1&0\\
        &&&&&&&&\ddots&&&&&\ddots\\        
    \end{NiceMatrix}
    \end{NiceMatrixBlock}.
    \end{align*}
   Therefore, we also have $Q((0,1,0,-1,2,3,1,0))=3$. By Theorem \ref{connected component classification}, there should be a frieze pattern containing both these rows. Indeed, they are both contained in the periodic wild frieze pattern 
      \begin{align*}
    \begin{NiceMatrixBlock}[auto-columns-width]
    \begin{NiceMatrix}
    \ddots&&&&&\ddots\\
        \color{red} \color{red}0& \color{red}1& \color{red}-4& \color{red}-1& \color{red}-1& \color{red}4& \color{red}1& \color{red}0\\
        &0&1&0&-1&3&1&1&0\\
        &&0&1&0&-1&0&1&1&0\\
        &&&\color{red} 0&\color{red} 1&\color{red} 0&\color{red} -1&\color{red} 2&\color{red} 3&\color{red} 1&\color{red} 0\\
        &&&&0&1&0&-1&-1&0&1&0\\
        &&&&&0&1&1&0&-1&0&1&0\\
        &&&&&&0&1&1&-5&-1&0&1&0\\
        &&&&&&&\color{red} \color{red}0& \color{red}1& \color{red}-4& \color{red}-1& \color{red}-1& \color{red}4& \color{red}1& \color{red}0\\
        &&&&&&&&&\ddots&&&&&\ddots\\        
    \end{NiceMatrix}
    \end{NiceMatrixBlock}.
    \end{align*}
  
\end{Example}

We now want to count the connected components of $\Gamma_{2,n}(\mathbb{Z})$. Using Theorem \ref{connected component classification}, we only need to count (twisted) Conway-Coxeter frieze patterns up to the action of a cyclic group of order $n+3$ and determine the number of possible values of $Q(v)$.

\begin{Definition}
    For $m \in \mathbb{Z}_{\geq 1}$ let $C_m:=\frac{1}{m+1}\binom{2m}{2}$ denote the $m$-th Catalan number. For $q \in \mathbb{Q} \setminus \mathbb{Z}_{\geq 1}$ we define $C_q:=0.$
\end{Definition}
We will use the following Lemma from Bowman and Regev.
\begin{Lemma}\cite[Theorem 29]{BR14} \label{triangulations up to rotation}\footnote{\phantom{r}Triangulations up to rotational symmetry were first enumerated in \cite[Equation (6.4)]{Bro64} but not in terms of Catalan numbers. See also \cite{Sloane} for various interpretations of the sequence.}
    The number of triangulations of a convex $m$-gon up to rotational symmetry is given by 
    \begin{equation*}
        \frac{1}{m}C_{m-2}+\frac{1}{2}C_{\frac{m}{2}-1}+\frac{2}{3}C_{\frac{m}{3}-1}.
    \end{equation*}
\end{Lemma}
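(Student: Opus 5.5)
The statement to prove is the count of triangulations of a convex $m$-gon up to rotation. I would prove it with Burnside's lemma, applied to the cyclic group $\mathbb{Z}/m$ acting by rotation on the set of triangulations. The total number of triangulations is the Catalan number $C_{m-2}$.

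Burnside gives that the number of orbits is $\frac{1}{m}\sum_{k=0}^{m-1}\mathrm{Fix}(r^k)$, where $r$ is rotation by $2\pi/m$. The identity fixes all $C_{m-2}$ triangulations, which produces the term $\frac{1}{m}C_{m-2}$.

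The key geometric step is to show that a nontrivial rotation can fix a triangulation only if it has order $2$ or $3$. Suppose $r^k \neq \mathrm{id}$ fixes a triangulation $T$. The centre of the polygon lies either in the interior of a unique triangle of $T$ or in the interior of a unique diagonal of $T$; it cannot lie on an edge of the polygon. By uniqueness, that triangle or diagonal is preserved by the rotation.

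\begin{itemize}
\item If it is a diagonal through the centre, the rotation must be the half-turn. This requires $m$ to be even.
\item If it is a triangle containing the centre in its interior, the rotation permutes the triangle's three vertices. Since the rotation is nontrivial and fixes no vertex, it cyclically permutes them, so it has order $3$. This requires $3 \mid m$, and the triangle is equilateral.
\end{itemize}

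Next I count the fixed points in each case. For the half-turn with $m$ even, there are $m/2$ choices of diameter. The diameter splits the polygon into two $(m/2+1)$-gons, and the half-turn swaps them, so a fixed triangulation is determined by a triangulation of one half. This gives $\frac{m}{2}C_{m/2-1}$ fixed triangulations. Only one group element has order $2$, so the contribution is $\frac{1}{m}\cdot\frac{m}{2}C_{m/2-1}=\frac12 C_{m/2-1}$.

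For the two rotations of order $3$ with $3\mid m$, there are $m/3$ central equilateral triangles. Each leaves three congruent $(m/3+1)$-gons permuted cyclically, and a fixed triangulation is determined by a triangulation of one of them. So each such rotation fixes $\frac{m}{3}C_{m/3-1}$ triangulations. The two rotations together contribute $\frac{1}{m}\cdot 2\cdot\frac{m}{3}C_{m/3-1}=\frac23 C_{m/3-1}$.

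The convention $C_q=0$ for $q\notin\mathbb{Z}_{\ge1}$ automatically removes these terms when $2\nmid m$ or $3\nmid m$. It also handles degenerate small $m$, although edge cases such as $m=3$ should be checked separately against the convention. Summing the three contributions gives the formula.

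I expect the main obstacle to be the uniqueness argument for the central cell: the centre lies either in the interior of exactly one triangle or on exactly one diagonal. Invariance under the rotation then forces that cell to be preserved, and this rules out all other rotation orders.
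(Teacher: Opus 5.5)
The paper gives no proof of this lemma. It quotes the count from \cite[Theorem 29]{BR14}, where it appears within a general enumeration of dissections of a regular polygon up to symmetry; the footnote also points to Brown's older enumeration. Your Burnside argument is therefore a genuinely independent, self-contained route, and it is correct.

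In any triangulation the centre lies in the interior of exactly one triangle or on exactly one diagonal, since diagonals meet only at vertices. A nontrivial rotation fixing the triangulation must preserve this cell. That forces either the half-turn (a central diameter whose endpoints are swapped) or a rotation of order $3$ (a fixed-point-free, hence cyclic, permutation of the central triangle's vertices). Because each triangulation has only one central cell, your counts $\frac{m}{2}C_{m/2-1}$ and $\frac{m}{3}C_{m/3-1}$ involve no overcounting. Also, $\mathbb{Z}/m$ has exactly one element of order $2$ when $2\mid m$ and exactly two of order $3$ when $3\mid m$.

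The citation buys brevity. Your argument buys a transparent explanation of where the three terms come from (identity, half-turn, order-$3$ rotations) and of exactly when each is valid.

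On the edge case you left open: for $m=3$ the three leftover regions are degenerate $2$-gons, each with one (empty) triangulation. So your count needs $C_0=1$, and then gives $\frac13+\frac23=1$. The paper's stated convention $C_q:=0$ for $q\notin\mathbb{Z}_{\ge 1}$ makes $C_0=0$, and the displayed formula would then give $\frac13$ for a triangle. So the lemma as literally stated needs $m\ge 4$, or the convention $C_0:=1$. This is harmless in the paper, which only uses $m=n+3\ge 6$.

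Likewise, your total $C_{m-2}$ refers to the usual $C_k=\frac{1}{k+1}\binom{2k}{k}$; the $\binom{2m}{2}$ in the paper's definition is a typo.
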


\begin{Corollary}\label{CC-frieze patterns up to rotations}
    Let $n\in \mathbb{Z}_{\geq 3}$. Then the number of Conway-Coxeter frieze patterns of height $n$ up to rotation of rows is 
       \begin{equation*}
        \frac{1}{n+3}C_{n+1}+\frac{1}{2}C_{\frac{n+1}{2}}+\frac{2}{3}C_{\frac{n}{3}}.
    \end{equation*}
    \begin{proof}
        The bijection between triangulations of convex $(n+3)$-gons and Conway-Coxeter frieze patterns is equivariant with respect to the actions of the cyclic group of order $n+3$ on triangulated $(n+3)$-gons and tame frieze patterns of height $n$. 
        In particular, these group actions have the same number of orbits. Now the claim follows from Lemma \ref{triangulations up to rotation}.
    \end{proof}
\end{Corollary}

\begin{Lemma}
    Let $n\in \mathbb{Z}_{\geq 0}$. Then there are $\left\lceil \frac{n+2}{2}\right\rceil$ different possible values for the quiddity number of rows of frieze patterns over $\mathbb{Z}$ of height $n$.
    \begin{proof}
        Let $\mathcal{F}_v$ be a tame frieze pattern containing the row $v$.
        If $L$ is an admissible labeling with $f_n(L)=\mathcal{F}_v$, then $Q(\mathcal{F}_v)$ is three times the sum of the labels of $L$.
        This means we can count possible sums of labels in admissible labelings instead. Moreover, since any label $a\notin \{\pm1\}$ appears in a  quadrilateral with a label $-a$, we already get all possible sums of labels if we only allow the labels $1$ and $-1$. 
        Let $l_{-1}$ be the number of triangles labeled $-1$ in some admissible labeling $T$. Then the sum of all labels is $n+1-2l_{-1}$.  
        Now there are $n+1$ triangles and the sign of the labeling is $1$ if and only if there is an even number of labels $-1$.
        Therefore, the possible values for $l_{-1}$ are $0,2,\dots,n-2,n$ if $n$ is even and $0,2,\dots,n-1,n+1$ if $n$ is odd. In both cases there are $\left\lceil\frac{n+2}{2}\right\rceil$ possible values for $l_{-1}$ and $n+1-2l_{-1}$.
    \end{proof}
\end{Lemma}

\begin{Proposition}\label{counting connected components}
    Let $n \in \mathbb{Z}_{\geq3}$. The number of strongly connected components of $\Gamma_{2,n}(\mathbb{Z})$ is 
    \begin{equation*}
        \frac{1}{n+3}C_{n+1}+\frac{1}{2}C_{\frac{n+1}{2}}+\frac{2}{3}C_{\frac{n}{3}}+\frac{n}{2}
    \end{equation*}
    if $n$ is even and 
    \begin{equation*}
        \frac{2}{n+3}C_{n+1}+C_{\frac{n+1}{2}}+\frac{4}{3}C_{\frac{n}{3}}+\frac{n-1}{2}
    \end{equation*}
    if $n$ is odd.
    \begin{proof}
        If $n$ is even, then by Theorem \ref{connected component classification} and Corollary \ref{CC-frieze patterns up to rotations} there are $\frac{1}{n+3}C_{n+1}+\frac{1}{2}C_{\frac{n+1}{2}}+\frac{2}{3}C_{\frac{n}{3}}$ strongly connected components where the quiddity number of the vertices is $3n+3$, but there are no strongly connected components where the quiddity number of the vertices is $-(3n+3)$. \\
        If $n$ is odd, there are also $\frac{1}{n+3}C_{n+1}+\frac{1}{2}C_{\frac{n+1}{2}}+\frac{2}{3}C_{\frac{n}{3}}$ strongly connected components, where the quiddity number of the vertices is $-(3n+3)$. They correspond to twisted Conway-Coxeter frieze patterns up to rotations. 
        \\ 
        There are  $\frac{n}{2}$ additional strongly connected components corresponding to quiddity numbers different from $\pm(3n+3)$ if $n$ is even and $\frac{n-1}{2}$ if $n$ is odd.

    \end{proof}
\end{Proposition}

\begin{Example}
    The following table gives the number of strongly connected components of $\Gamma_{2,n}(\mathbb{Z})$ for $n\leq10$.
    \begin{center}
\begin{tabular}{ c| c c c c c c c c c c c }
 $n$ & $0$ & $1$ & $2$ & $3$ & $4$ & $5$ & $6$ & $7$ & $8$ & $9$ & $10$ \\ 
 \hline
   \#components with $Q(v)=3n+3$  & $1$& $1$ & $1$ & $4$ & $6$ & $19$& $49$& $150$ & $442$ & $1424$ & $4522$\\
   \#components with $Q(v)=-(3n+3)$  & $0$& $1$ & $0$ & $4$ & $0$ & $19$& $0$& $150$ & $0$ & $1424$ & $0$\\
   \#components with $Q(v)\neq \pm(3n+3)$  & $0$& $0$ & $1$ & $1$ & $2$ & $2$& $3$& $3$ & $4$ & $4$ & $5$\\
   \#all components & $1$& $2$ & $2$ & $9$ & $8$ & $40$& $52$& $303$ & $446$ & $2852$ & $4527$\\
\end{tabular}
\end{center}
\end{Example}

\begin{Remark}
    By taking a close look at the steps used to prove Theorem \ref{connected component classification}, we can see that each strongly connected component of $\Gamma_{2,n}(\mathbb{Z})$ has finite diameter. 
    One can also derive upper bounds depending only on $n$ this way. We first bound the lengths of the sequences obtained from Lemma \ref{only 1s}, Lemma \ref{get triangulation right} and Lemma \ref{get labels right}. 
    Since a triangulated $(n+3)$-gon consists of $n+1$ triangles, we need to reassign at most $\frac{n+1}{2}$ polarized  quadrilaterals for Lemma \ref{only 1s}. In Lemma \ref{get triangulation right}, we need to do at most $2n$ flips.
    (In \cite[Theorem 2.1]{wood1982note}, this bound was proved for the rotation distance of labeled binary trees. In \cite{sleator1986rotation}, this was reformulated in terms of triangulations and flips and improved for sufficiently large $n$.) 
    
    For each flip we have to reassign at most $n$ overlapping polarized  quadrilaterals, then do a flip and finally do one more reassignment. So in total we need at most $2n(n+2)$ new labelings for Lemma \ref{get triangulation right}. Finally, for Lemma \ref{get labels right} we need at most $\sum_{k=1}^{n}k=\frac{n(n+1)}{2}$ additional labelings. Since we use Lemma \ref{only 1s} twice, we see that  
    \begin{align*}
        r\leq2\cdot\frac{n+1}{2}+2n(n+2)+\frac{(n+1)n}{2}=\frac{5}{2}n^{2}+\frac{11}{2}n+2
    \end{align*}
    in (\ref{long sequence}). The directed paths between the $u_i$ and $u_{i+1}$ have length at most $n+2$ because of the periodicity of tame frieze patterns.
    Therefore, the diameter of each infinite strongly connected component of $\Gamma_{2,n}(\mathbb{Z})$ is bounded by
    \begin{align*}
        r(n+2)\leq \frac{5}{2}n^{3}+\frac{21}{2}n^{2}+13n+4.
    \end{align*}
    The finite strongly connected components have diameter at most $n+2$.
    \\
    \\
    It should be possible to obtain a significantly better bound for the diameter of the infinite components.
\end{Remark}
\label{section_components}
\section{\texorpdfstring{More properties of $\boldsymbol{\Gamma_{2,n}(\mathbb{Z})}$}{More properties}}
In this section we want to collect some more facts about the graph $\Gamma_{2,n}(\mathbb{Z})$. First, we characterize $2$-periodic frieze patterns, which correspond to loops or $2$ cycles in $\Gamma_{2,n}(\mathbb{Z})$. Next, we show that each finite simple directed graph appears as an induced subgraph of $\Gamma_{2,n}(\mathbb{Z})$ for sufficiently large $n$.

\begin{Lemma}\label{every second diagonal constant}
    Let 
    \begin{align*}
          \begin{NiceMatrix}
            \ddots&&&&\ddots\\
            0&1&x_1&\dots&x_n&1&0\\
            &0&1&y_1&\dots&y_n&1&0\\
            &&0&1&x_1&\dots&x_n&1&0\\
            &&&0&1&y_1&\dots&y_n&1&0\\
            &&&&&\ddots&&&&\ddots\\
        \end{NiceMatrix}
    \end{align*}
    be a $2$-periodic frieze pattern over $\mathbb{Z}$.
    Let $r$ be the smallest integer such that either $x_r=0$ or $y_r=0$ .
    Then $x_{2k}=y_{2k}$ for $k=1,\dots,\operatorname{min}\left(\left\lfloor\frac{n}{2}\right\rfloor,r-1\right)$.
    Moreover, if $x_{2k}x_{2k-2}\neq-1$,
    we have 
    \begin{equation}\label{even diagonal recursion}
          x_{2k+2}=\frac{x_{2k}^{3}-2x_{2k}-x_{2k-2}}{x_{2k}x_{2k-2}+1}.    
    \end{equation}
\begin{proof}
    For the first claim note that the $SL_2$ rule tells us that
    \begin{equation*}
          x_{2k}y_{2k-2}=x_{2k-1}y_{2k-1}-1=y_{2k}x_{2k-2}.
    \end{equation*}
Since $x_0=y_0=1$, the equality $x_{2k}=y_{2k}$ now follows by induction using $x_{2k-2}=y_{2k-2}\neq 0$.
For proving \eqref{even diagonal recursion}, we will use the following special cases of the $SL_2$ rule for $k=1,\dots,n$:
\begin{equation}\label{rec1}
    x_{2k+1}y_{2k-1}=x^{2}_{2k}-1=y_{2k+1}x_{2k-1}.
\end{equation}
\begin{equation}\label{rec2}
    x_{2k}x_{2k-2}+1=x_{2k-1}y_{2k-1}.
\end{equation}

From those equations, we conclude
\begin{align*}
    x^{4}_{2k}-2x_{2k}+1&\phantom{.}=\left(x^{2}_{2k}-1\right)^{2}\\ 
    &\overset{\eqref{rec1}}{=} x_{2k+1}y_{2k-1}y_{2k+1}x_{2k-1}\\
    &\overset{\eqref{rec2}}{=}(x_{2k}x_{2k+2}+1)(x_{2k-2}x_{2k}+1)\\
    &=x_{2k-2}x^{2}_{2k}x_{2k+2}+x_{2k}x_{2k+2}+x_{2k-2}x_{2k}+1.
\end{align*}
Now if $x_{2k}x_{2k-2}\neq 1$, we can solve for $x_{2k+2}$ and obtain \eqref{even diagonal recursion}.
\end{proof}
\end{Lemma}

The following Lemma will later be used for restricting the possible values of $x_2=y_2$ in $2$-periodic frieze patterns.
\begin{Lemma}\label{sequence growth}
Let $x_{2k}$ be recursively defined by equation \eqref{even diagonal recursion}, $x_0:=1$ and specifying $x_2$ to some integer not contained in $\{-1,0,1,2\}$. Then we have $\lvert x_{2k+2}\rvert>\lvert x_{2k}\rvert$ for all $k\in \mathbb{Z}_{\geq 0}$.
\begin{proof}
    We first consider $x_2 \geq 3$. We want to prove inductively that $x_{k}\geq x_{k-2}+2$ implies $x_{2k+2}\geq x_{2k}+2$. 
    \begin{align*}
        x_{2k+2}&=\frac{x_{2k}^{3}-2x_{2k}-x_{2k-2}}{x_{2k}x_{2k-2}+1}  \\
            &\geq \frac{x_{2k}^{3}-3x_{2k}+2}{x_{2k}(x_{2k}-2)+1}\\
            &=\frac{(x_{2k}-1)^{2}(x_{2k}+2)}{(x_{2k}-1)^{2}}\\
            &=x_{2k}+2.
    \end{align*}
    Here we used the assumption $x_{2k} \geq x_{2k-2}$ both in the numerator and the denominator.
    \\
     
    Now we assume $x_2\leq -2$. We want to show inductively that $x_{2k+2}\leq-2x_{2k}\leq 0$ if $k$ is even and $x_{2k+2} \geq -2x_{2k} \geq 0$ if $k$ is odd.\\
    By assumption $a_2 =-2 \leq -2 a_0 \leq 0$ holds.
    Assume first that $k$ is odd. Then by induction $x_{2k}\leq -2x_{2k-2}$ and $x_{2k-2} \geq 0$ and we can estimate
    \begin{align*}
    x_{2k+2}&=\frac{x_{2k}^{3}-2x_{2k}-x_{2k-2}}{x_{2k}x_{2k-2}+1}\\
            &\geq\frac{x_{2k}^{3}-2x_{2k}}{x_{2k}x_{2k-2}+1}\\
            &\geq \frac{-2x_{2k-2}x_{2k}^{2}-2x_{2k}}{x_{2k}x_{2k-2}+1}\\
            &=-2x_{2k}.
    \end{align*}
    Now if $k$ is even, we have $x_{2k} \geq -2x_{2k-2}$ and $x_{2k-2} \leq 0$ and we can estimate 
     \begin{align*}
    x_{2k+2}&=\frac{x_{2k}^{3}-2x_{2k}-x_{2k-2}}{x_{2k}x_{2k-2}+1}\\
            &\leq\frac{x_{2k}^{3}-2x_{2k}}{x_{2k}x_{2k-2}+1}\\
            &\leq \frac{-2x_{2k-2}x_{2k}^{2}-2x_{2k}}{x_{2k}x_{2k-2}+1}\\
            &=-2x_{2k}.
    \end{align*}
    This tells us that $\lvert x_{2k+2}\rvert \geq 2\lvert x_{2k}\rvert >\lvert x_{2k}\rvert$.
\end{proof}
\end{Lemma}

We also get an analogous statement if we set $x_0:=-1$.

\begin{Corollary}\label{-1 version}
    Let $x_{2k}$ be recursively defined by equation \eqref{even diagonal recursion}, $x_0:=-1$ and specifying $x_2$ to some integer not in $\{-1,-2,0,1\}$. Then we have $\lvert x_{2k+2}\rvert>\lvert x_{2k}\rvert$ for all $k\in \mathbb{Z}_{\geq 0}$.
    \begin{proof}
        If we multiply $x_0$ and $x_2$ by $-1$, then all other $x_{2k}$ will also be multiplied by $-1$ because the right side of \eqref{even diagonal recursion} is an odd symmetric function in $x_{2k}$ and $x_{2k-2}$. This way, the claim follows directly from Lemma \ref{sequence growth}.
    \end{proof}
\end{Corollary}
\newpage

\begin{Lemma}
    Let $\mathcal{F}$ be a $2$-periodic frieze pattern over $\mathbb{Z}$. Then we can obtain $\mathcal{F}$ from the segments
    \begin{NiceMatrixBlock}[auto-columns-width]
       \begin{align}
        \begin{NiceMatrix}\label{height0 segments}
            \color{red}\ddots&\color{red}\ddots\\
            &\color{red}1&\color{red}\pm1\\
            &&\color{red}1&\color{red}\pm1\\
            &&&\color{red}1&\color{red}\pm1\\
            &&&&\color{red}\ddots&\color{red}\ddots\\
        \end{NiceMatrix}
         \begin{NiceMatrix}
            \color{red}\ddots&\color{red}\ddots\\
            &\color{red}-1&\color{red}\pm1\\
            &&\color{red}-1&\color{red}\pm1\\
            &&&\color{red}-1&\color{red}\pm1\\
            &&&&\color{red}\ddots&\color{red}\ddots\\
        \end{NiceMatrix}
    \end{align}

     \begin{align}\label{height1 segments}
        \begin{NiceMatrix}
            \color{red}\ddots&&\color{red}\ddots\\
            &\color{red}1&\pm1&\color{red}{1}\\
            &&\color{red}1&\pm2&\color{red}{1}\\
            &&&\color{red}1&\pm1&\color{red}{1}\\
            &&&&\color{red}\ddots&&\color{red}\ddots\\
        \end{NiceMatrix}
        \begin{NiceMatrix}
            \color{red}\ddots&&\color{red}\ddots\\
            &\color{red}-1&\pm1&\color{red}{-1}\\
            &&\color{red}-1&\pm2&\color{red}{-1}\\
            &&&\color{red}-1&\pm1&\color{red}{-1}\\
            &&&&\color{red}\ddots&&\color{red}\ddots\\
        \end{NiceMatrix}
    \end{align}
\small{
    \begin{align}\label{height3 segments}
        \begin{NiceMatrix}
            \color{red}\ddots&&&&\color{red}\ddots&&&\\
            &\color{red}1&\pm1&2&\pm1&\color{red}1\\
            &&\color{red}1&\pm3&2&\pm3&\color{red}1\\
            &&&\color{red}1&\pm1&2&\pm1&\color{red}1\\
            &&&&\color{red}\ddots&&\color{red}\ddots\\
        \end{NiceMatrix}
\begin{NiceMatrix}
            \color{red}\ddots&&&&\color{red}\ddots&&&\\
            &\color{red}-1&\pm1&2&\pm1&\color{red}-1\\
            &&\color{red}-1&\pm3&2&\pm3&\color{red}-1\\
            &&&\color{red}-1&\pm1&2&\pm1&\color{red}-1\\
            &&&&\color{red}\ddots&&\color{red}\ddots\\
        \end{NiceMatrix}
    \end{align}
    }
     \begin{align}\label{0 segment}
        \begin{NiceMatrix}
            \color{red}\ddots&&\color{red}\ddots\\
            &\color{red}\pm1&a&\color{red}\mp1\\
            &&\color{red}\pm1&0&\color{red}\mp1&\\
            &&&\color{red}\pm1&a&\color{red}\mp1&\\
            &&&&\color{red}\ddots&&\color{red}\ddots
        \end{NiceMatrix}
        \end{align}
        \begin{center}
            for some $a \in \mathbb{Z}$.
        \end{center}
         \begin{align}\label{start-end segment}
        \begin{NiceMatrix}
            \ddots&\color{red}\ddots\\
            &0&\color{red}1\\
            &&0&\color{red}1\\
            &&&0&\color{red}1\\
            &&&&\ddots&\color{red}\ddots\\
        \end{NiceMatrix}
        \begin{NiceMatrix}\
            \color{red}\ddots&\ddots\\
            &\color{red}1&0\\
            &&\color{red}1&0\\
            &&&\color{red}1&0\\
            &&&&\color{red}\ddots&\ddots\\
        \end{NiceMatrix}
    \end{align}
\end{NiceMatrixBlock}
    and their rotations by gluing them together along the red diagonals.
    \begin{proof}
        If $\mathcal{F}$ has some entry equal to $0$, then around the $0$ the frieze pattern looks like \begin{align*}
        \begin{NiceMatrix}
            &\pm1&\\
            \mp1&0&\pm1\\
            &\mp1&&
        \end{NiceMatrix}.
    \end{align*}
    The $2$-periodicity now tells us that the diagonal containing the $0$, together with the previous and next diagonal, form a segment of type \eqref{0 segment}. 
    If $\mathcal{F}$ has $r$ diagonals containing only entries $1$ and $-1$, we can split $\mathcal{F}$ into $r+1$ segments. We obtain segments of type \eqref{start-end segment} from the bounding diagonals of $\mathcal{F}$.
    Any segment that contains a $0$ is of type \eqref{0 segment} as seen above. Otherwise, the segment is of one of the forms
    \begin{align}\label{nonzero-segment}
    \begin{NiceMatrix}
            \color{red}\ddots&&&&\color{red}\ddots\\
            &\color{red}1&x_1&\dots&x_k&\color{red}{\pm1}\\
            &&\color{red}1&y_1&\dots&y_k&\color{red}{\pm1}\\
            &&&\color{red}1&x_1&\dots&x_k&\color{red}{\pm1}\\
            &&&&\color{red}\ddots&&&&\color{red}\ddots\\
        \end{NiceMatrix}
        \begin{NiceMatrix}
            \color{red}\ddots&&&&\color{red}\ddots\\
            &\color{red}-1&x_1&\dots&x_k&\color{red}{\pm1}\\
            &&\color{red}-1&y_1&\dots&y_k&\color{red}{\pm1}\\
            &&&\color{red}-1&x_1&\dots&x_k&\color{red}{\pm1}\\
            &&&&\color{red}\ddots&&&&\color{red}\ddots\\
        \end{NiceMatrix}
    \end{align}
     with $x_i\neq 0$ and $y_i\neq 0$ for $i=1,\dots,k$.\\
     \\
    Now it suffices to show that all segments of type \eqref{nonzero-segment} appear in the list above.
    Assume first that the segment starts with a diagonal of $1$s. 
    By Lemma \ref{every second diagonal constant}, every second diagonal is constant.
    Moreover, the only possible values for $x_2=y_2$ are $-1,1,0,2$ by Lemma \ref{sequence growth} because otherwise we would never get a diagonal of $1$s or $-1$s again.\footnote{We would still get an infinite frieze pattern as defined in \cite[Definition 1.1]{tschabold2015arithmetic}.} 
    Since $x_1y_1=x_2+1$, we can additionally rule out $x_2=-1$ because this would force one of $x_1$ and $y_1$ to be $0$.
    From the same equation, we see that $(x_1,y_1)\in\{(\pm1,\pm1),(\pm1,\pm2),(\pm2,\pm1),(\pm1,\pm3),(\pm3,\pm1)\}$.
    From the values of $x_1$ and $x_2$, we can recover the rest of the segment. This way, we see that the segment is one of those appearing in the list above.
    \\
    If the segment starts with a diagonal of $-1$s, we proceed similarly. Using Corollary \ref{-1 version}, we conclude that the possible values for $x_2=y_2$ are $-1,-2,0,1$. From the equation $x_1y_1=-x_2+1$, we can rule out $x_2=1$. We see that again $(x_1,y_1)\in\{(\pm1,\pm1),(\pm1,\pm2),(\pm2,\pm1),(\pm1,\pm3),(\pm3,\pm1)\}$. By computing the next rows from these values, we see that we again only obtain segments from the list above.
    \end{proof}
    
\end{Lemma}

\begin{Remark}
    A $2$-periodic frieze pattern can correspond to a loop or a $2$-cycle in $\Gamma_{2,n}(\mathbb{Z})$.
    We get a loop if each row of the frieze pattern is the same. This means loops correspond to frieze patterns glued together from segments of type \eqref{height0 segments}, type \eqref{start-end segment} and segments of type \eqref{0 segment} with $a=0$. If some other segment is used, the frieze pattern corresponds to a $2$-cycle.\\
\end{Remark}

\begin{Proposition}\label{embed finite digraphs}
    Let $G=(V,A)$ be a finite simple directed graph. Then there is $n\in \mathbb{Z}_{\geq 0}$ such that $G$ is isomorphic to an induced subgraph of $\Gamma_{2,n}(\mathbb{Z})$.
    \begin{proof}
        We can assume that $V=\{1,\dots,r\}$ for some $r$ and $A\subseteq V^{2}$.
        We will first construct vertices $V^{0}_1,\dots,V^{0}_r$ such that their induced subgraph is the complete directed graph on $r$ vertices. In the second step we will modify the vertices to get rid of arrows that do not exist in $G$. We will write $x||y$ for the concatenation of two tuples $x$ and $y$.

         Let $R_0:=
        (1,0,-1,0)$ and $R_1:=
        (1,1,-1,0)$. Then for $0\leq k \leq r-1$ consider the tuple 
    \begin{align*}
        V^{0}_k:=(0)||\underset{k \text{ times}}{\underbrace{R_0||\dots||R_0}}||R_1||\underset{r-k-1 \text{ times}}{\underbrace{R_0||\dots||R_0}}||
        (\begin{NiceMatrix}
        1&0  \\
    \end{NiceMatrix}).
    \end{align*}
    For all $l\neq k$ we have $V^{0}_k\neq V^{0}_l$ and both arrows between $V^{0}_k$ and $V^{0}_l$ exist in $\Gamma_{2,n}(\mathbb{Z})$. 
    Moreover, the $R_1$ in the middle ensures that $V^{0}_k$ has no arrow to itself.
    Therefore, the induced subgraph of $V^{0}_1,\dots,V^{0}_r$ will be a complete directed graph of size $r$. Now we need to get rid of superfluous arrows.\\
    We define
    $A:=
    (-1,1,1,0,-1,1,0 )$, $B:=
    (-1,0,1,1,-1,1,0)$ and $C:=(-1,0,1,0,-1,1,0)$.
    All five segments

    \begin{align*}
    \begin{NiceMatrixBlock}[auto-columns-width]
    \begin{NiceMatrix}
        A:&-1&1&1&0&-1&1&0  \\
        B:&&-1&0&1&1&-1&1&0
    \end{NiceMatrix}
     \end{NiceMatrixBlock}
    \\\\
    \begin{NiceMatrixBlock}[auto-columns-width]
    \begin{NiceMatrix}
        A:&-1&1&1&0&-1&1&0  \\
        C:&&-1&0&1&0&-1&1&0
    \end{NiceMatrix}
    \end{NiceMatrixBlock}
    \\\\
    \begin{NiceMatrixBlock}[auto-columns-width]
     \begin{NiceMatrix}
        C:& -1&0&1&0&-1&1&0\\
        A:&&-1&1&1&0&-1&1&0 
    \end{NiceMatrix}
    \end{NiceMatrixBlock}
    \\\\
    \begin{NiceMatrixBlock}[auto-columns-width]
    \begin{NiceMatrix}
        B:&-1&0&1&1&-1&1&0\\
        C:&& -1&0&1&0&-1&1&0
    \end{NiceMatrix}
    \end{NiceMatrixBlock}
    \\\\
    \begin{NiceMatrixBlock}[auto-columns-width]
    \begin{NiceMatrix}
        C:&-1&0&1&0&-1&1&0\\
        B:&&-1&0&1&1&-1&1&0
    \end{NiceMatrix}
    \end{NiceMatrixBlock}
    \end{align*}
    
    satisfy the $SL_2$ rule everywhere, but the segment
    \begin{align*}
    \begin{NiceMatrixBlock}[auto-columns-width]
         \begin{NiceMatrix}
         B:&-1&0&1&1&-1&1&0\\
        A:&&-1&1&1&0&-1&1&0  
    \end{NiceMatrix}
    \end{NiceMatrixBlock}
    \end{align*}
    has one $2\times2$ block with determinant $0$. 
    Let $(i,j)$ be an arrow that does not appear in $G$.
    We can define 
    \begin{align*}
        V^{1}_{k}:=\begin{cases}
        V^{0}_{k} ||B&\text{if }k=i\\
        V^{0}_{k} ||A&\text{if }k=j\\
        V^{0}_{k} ||C&\text{otherwise}
    \end{cases}.
    \end{align*}
    \\
    \\
    The induced subgraph of $V^{1}_k,\dots,V^{1}_k$ will not have the arrow $(i,j)$ anymore because the last segment violates the $SL_2$ rule. Otherwise, the graph will have the same arrows as the induced subgraph of $V^{0}_k,\dots,V^{0}_k$.
    We can inductively remove all other unwanted arrows with the same strategy.\\
    The $V^{0}_{k}$ are all elements of $\Gamma_{2,4r-1}(\mathbb{Z})$. Each time we remove an arrow, we increase the length of the tuples by $7$. If we have to remove $s$ arrows, then $\Gamma_{2,4r+7s-1}$ has an induced subgraph isomorphic to $G$.
    \end{proof}
\end{Proposition}

\begin{Remark}
    The construction in the proof of Lemma \ref{embed finite digraphs} is incredibly wasteful. For example, the directed graph with $r$ vertices and no arrows would be realized as an induced subgraph of $\Gamma_{2,4r+7\binom{r}{2}-1}(\mathbb{Z})$ this way. Instead, by picking $r$ different vertices of the form $(\begin{NiceMatrix}{}
        0&1&a&-1&0&1&0
    \end{NiceMatrix})$, one could already realize this as an induced subgraph of $\Gamma_{2,3}(\mathbb{Z})$.
\end{Remark}

\label{section_properties}
\newpage
\printbibliography
\end{document}